\documentclass[11pt, reqno]{amsart}
\usepackage{amsmath}
\usepackage{amsthm}
\usepackage{amssymb}
\usepackage{amscd}
\usepackage{amsbsy}
\usepackage{epsfig}
\usepackage{graphicx}
\usepackage{psfrag}
\usepackage{bbm}
\usepackage{hyperref}
\usepackage[UKenglish]{babel}
\usepackage[UKenglish]{isodate}
\usepackage{cleveref}
\usepackage[pagewise]{lineno}

\parindent=0cm
\parskip=2mm

\theoremstyle{plain}
\newtheorem{theorem}{Theorem}[section]

\newtheorem{remark}[theorem]{Remark}
\newtheorem{proposition}[theorem]{Proposition}
\newtheorem{lemma}[theorem]{Lemma}

\theoremstyle{definition}
\newtheorem{definition}[theorem]{Definition}
\newtheorem{claim}{Claim}
\newtheorem*{claim*}{Claim}

\newtheorem*{lemma*}{Lemma}


\theoremstyle{definition}

\def\R{\ensuremath{\mathbb R}}
\def\N{\ensuremath{\mathbb N}}

\def\B{\ensuremath{\mathcal B}}
\def\M{\ensuremath{\mathcal M}}
\def\C{\ensuremath{\mathcal C}}
\def\L{\ensuremath{\mathcal L}}

\def\P{\ensuremath{\mathcal P}}

\def\F{\ensuremath{\mathcal F}}

\def\A{\ensuremath{\mathscr A}}

\def\ie{{\em i.e.}, }

\def\A{\mathcal{A}}

\def\eps{\varepsilon}

\numberwithin{equation}{section}

\begin{document}
\date{\today}

\author[M.~Todd]{Mike Todd}
\address{Mathematical Institute\\
University of St Andrews\\
North Haugh\\
St Andrews\\
KY16 9SS\\
Scotland} 
\email{m.todd@st-andrews.ac.uk}
\urladdr{https://mtoddm.github.io}    

\author[B.~Zhao]{Boyuan Zhao}
\address{Mathematical Institute\\
University of St Andrews\\
North Haugh\\
St Andrews\\
KY16 9SS\\
Scotland} 
\email{bz29@st-andrews.ac.uk}
\urladdr{https://boyuanzhao.github.io/}

\title{Countable Markov shifts with exponential mixing}
\date{\today}

\keywords{Countable Markov shifts, Exponential mixing}

\subjclass[2020]{37A25, 37D05, 37B10, 37D25}
\maketitle

\begin{abstract}
We give a set of equivalent conditions for a potential on a Countable Markov Shift to have strong positive recurrence, which is also equivalent to having exponential decay of correlations.  A key ingredient of our proofs is quantifying how the shift behaves at its boundary.
\end{abstract}

\section{Introduction}

A standard approach in ergodic theory and dynamical systems is to show that a dynamical system $f:X\to X$ with invariant probability measure $\mu$, asymptotically behaves like an independent, identically distributed process: how quickly this happens can be understood as the rate of decay of correlations, or rate of mixing, for $(X, f, \mu)$.  Generally, uniformly expanding systems with some compactness properties can be shown to have exponential rate decay of correlations.  Recent research has focussed on estimating the rate of decay of correlations for systems with non-uniformly expansion and/or lack of compactness.

Here we will consider dynamical systems $f:X\to X$ with measure $\mu=\mu_\phi$, which is an equilibrium state for some potential $\phi:X\to \overline{\R}$ .  Note that the first task in this setting is to show that such $\mu_\phi$ exists.  In the case of interval maps $f:I\to I$ with finitely many branches, on each of which $f$ is smooth, we might consider $\phi=-\log|Df|$, so an equilibrium state $\mu_\phi$ is an absolutely continuous (with respect to Lebesgue) invariant measure.  For example, if $f$ is unimodal and satisfies the Collet-Eckmann condition (exponential growth of derivative along the critical orbit), then $\mu_\phi$ exists and the system has exponential decay of correlations: indeed in \cite{NowSan98} these two conditions were shown to be equivalent, alongside a number of other conditions, one of which was that $\chi_{per}:=\inf\left\{\log|Df^n(x)|/n: f^n(x)=x, n\in \N\right\}$ has $\chi_{per}>0$. See \cite{Riv20} for related results in the multimodal case.

In the same dynamical setting, but in the case of H\"older potentials $\phi$, various conditions have been applied to ensure the existence of an equilibrium states, which also usually imply that the system also has exponential decay of correlations.  A classic such example is the condition that $\sup\phi-\inf\phi<h_{top}$, where $h_{top}$ denotes topological entropy, which was used for interval maps in \cite{HofKel82, BruTod08} for example (in the former a specification property was assumed, the latter dealt with multimodal maps of the interval).  A more general condition, which was used to show uniqueness of equilibrium states in \cite{DenKelUrb90},  is  $\sup\phi<P(\phi)$ where $P(\phi)$ is the pressure of $\phi$.   A weaker condition of \emph{hyperbolicity}, that there is some $n\in\N$ such that $\sup_{x\in X} S_n\phi(x)/n<P(\phi)$, was used in \cite{InoRiv12} in the case of complex rational maps, and in \cite{LiRiv14a} in the interval map case, to show existence and uniqueness of equilibrium states.  Then it was shown in \cite{LiRiv14b} that for many multimodal maps (satisfying a growth condition on critical orbits) \emph{all} H\"older potentials are hyperbolic. 
Related conditions for interval maps were given in \cite[Def 3.4]{LivSauVai98}, under which the authors  called the potential `contracting': added to a covering condition this implies exponential decay of correlations.

In the case of symbolic dynamical systems, nonuniform hyperbolicity can be a consequence of an infinite alphabet: here we will discuss Countable Markov Shifts.  In this setting, though we have a Markov structure which makes many issues straightforward, these structures are flexible, so that for example a very strong condition like $\sup\phi<P(\phi)$ is not sufficient to guarantee existence of equilibrium states (see Section~\ref{ssec:eg_not_spr}).  There are however a number of equivalent conditions guaranteeing exponential decay of correlations given in \cite{CyrSar09}, in particular Strong Positive Recurrence (SPR) and the Spectral Gap Property (SGP): these can be shown to imply exponential decay of correlations.  In this paper we combine the types of ideas used in that paper with those in  \cite{NowSan98} to show various equivalent conditions for (SPR), in particular that  $\chi_{per}(\phi):= \sup\left\{\frac1nS_n\phi(x): \sigma^n(x) = x, n\in \N\right\}$ has $\chi_{per}(\phi)<P(\phi)$.  The main idea is to use information on entropy at infinity, see \cite{IomTodVel22} (see also an equivalent notion in \cite{Buz10}) which, when it is not too large, can be used to control returns to compact parts of our system.  This idea can also be seen in \cite[Lemma 2.17, Theorem 7.14]{DobTod23}, where the entropy at infinity for the symbolic version of the finitely branched interval maps is zero.

We also give other conditions equivalent to the one mentioned above, one being `contraction at infinity': this rules out standard examples in the renewal shift, for example, where (SPR) fails due to very weak contraction in the boundary of the system.
We also give examples to show that that our conditions are sharp and moreover, other conditions related to those in \cite{NowSan98} are not equivalent to ours, see the discussion in Section~\ref{ssec:CEetc} and the examples in Section~\ref{sec:examples}.
We mention recent progress in the context of geodesic flows such as  \cite{IomRiqVel18, RiqVel19, SchTap21} which considered entropy and infinity and \cite{GouSchTap23} which linked this to an SPR condition.

The applications here are primarily to countable Markov shifts with finite topological entropy.  These include cases such as renewal shifts, $S$-gap shifts, $\beta$-shifts and symbolic models (such as Young towers or Hofbauer extensions) for interval maps with finitely many monotone branches, all of which have entropy at infinity equal to zero.  We also obtain results in the case when the topological entropy is finite, but the entropy at infinity is non-zero, see examples such as birth and death chains and others in \cite[Section 6]{IomTodVel22}, as well as the rich family of examples of bouquet shifts given in Section~\ref{sec:examples}.
Note that the main standard examples in the infinite topological entropy case are the full shift on $\N$ or a system with the BIP property: in these cases the condition $P(\phi)<\infty$ automatically implies SPR if $\phi$ is sufficiently regular.

The paper is organised as follows.  In Section~\ref{sec:defsthms} we give our main definitions and results, Theorems~\ref{thm:contract} and \ref{thm:UCS-SPR}.  In Section~\ref{sec:thmcontr} we prove Theorem~\ref{thm:contract} and in Section~\ref{sec:UCS-SPR} we prove Theorem~\ref{thm:UCS-SPR}.  Finally in Section~\ref{sec:examples} we give examples to show applications and sharpness of our results.

\textit{Acknowledgements.}  MT was partially supported by the FCT (Fundação para a Ciência e a Tecnologia) 
project 2022.07167.PTDC.  BZ acknowledges the support of a Chinese Scholarship Council grant.

\section{Definitions and main theorems}
\label{sec:defsthms}

In this section we give the general setting for our work and then state the main results.

\textit{Notation.}  If $A_n, B_n\in (0, \infty)$, we write $A_n\asymp B_n$ if there is some $C>0$ such that $B_n/C\le A_n\le CB_n$.  For a set $E$ we write $\#E$ to denote its cardinality.

\subsection{Countable Markov Shifts, basic setup}

Here we outline the Countable Markov Shift setting, for which we follow the works of Sarig, eg \cite{Sar99}.
Let $\mathcal{A}$ be a countable alphabet and $M$ an $\mathcal{A}\times\mathcal{A}$ transition matrix with entries 0 and 1.  Our shift space is given by $$\Sigma:=\left\{(x_0,x_1,\dots)\in\mathcal{A}^{\N_0}: M_{x_i,x_{i+1}}=1\text{ for }i\ge0\right\}.$$
Our dynamics is the usual shift $\sigma(x_0, x_1, \ldots) = (x_1, x_2, \ldots)$.

Let 
$$\Sigma_k:=\left\{w = w_0, \ldots, w_{k-1}: M_{w_i,w_{i+1}}=1 \text{ for }0\le i<k-1\right\}$$ be the set of all \emph{admissible words} with length $k$.  Let $\Sigma^*:= \cup_{k\ge 0} \Sigma_k$ (this means that we include the empty word here). Given $w= w_0, \ldots, w_{k-1}$, write $[w]_i=w_i$ for $0\le i\le k-1$. For each $v\in \Sigma_k, w\in \Sigma_{k'}$, if $M_{v_{k-1},w_0}=1$ the concatenation $vw$ is also admissible in $\Sigma_{k+k'}$; any point in $\Sigma$ can be viewed as an infinite concatenation of finite words. 

 We use $|\cdot|$ to denote the length of an admissible word: so if $w\in \Sigma_k$ then $|w|=k$. 
Given $x_0,\dots,x_{k-1}\in \Sigma_k$, the corresponding $k$-\emph{cylinder} is 
$$[x_0,\dots,x_{k-1}]:=\left\{y\in\Sigma:y_0=x_0, \ldots, y_{k-1}= x_{k-1}\right\}.$$ 
The set of all such $k$-cylinders is denoted by $\mathcal{C}_k$.  
We also define the symbolic metric $d(\cdot,\cdot)$ by
$$d(x,y)=2 ^{-t(x,y)},\hspace{3mm} t(x,y):=\min\{n\ge0:x_n\neq y_n\},$$
so $k$-cylinders have diameter $2^{-k}$.

The system is said to be \emph{topologically  transitive }if for each $a,b\in\mathcal{A}$, there exists $N_{ab}$ such that $[a]\cap\sigma^{-N_{ab}}[b]\neq\emptyset;$ the system is \emph{topologically mixing} if, moreover, 
$[a]\cap\sigma^{-n}[b]\neq\emptyset$ for any $n\ge N_{ab}$.

If a point $x$ is periodic with period $k$, it can be denoted by $x=(\overline{x_0,\dots,x_{k-1}}).$

Define the  \emph{$k$-th variation} of a potential $\phi:\Sigma\rightarrow\R$ by $$var_k(\phi):=\sup\left\{|\phi(x)-\phi(y)|:x_0=y_0,\dots,x_{k-1}=y_{k-1}\right\}.$$ 
Then $\phi$ is said to have \emph{summable variations} if the \emph{distortion constant} $B_\phi:=\sum_{k\geq2}var_k(\phi)$ is finite, and \emph{weakly H\"older continuous} if there exists $\theta\in(0,1)$ and $C>0$ such that $var_k(\phi)\le C\theta^k$ for all $k\ge 2$.

\subsection{Pressure, recurrence and contraction}

 For each $a\in\mathcal{A}$, we let $\varphi_a(x):=\mathbbm{1}_{[a]}\inf\{k\geq1:x_k=a\}$ be the \textit{first return time} to $[a]$ and
define $$Z_n(\phi,a):=\sum_{\sigma^nx=x}e^{S_n\phi(x)}\mathbbm{1}_{[a]}(x),\hspace{3mm}Z_n^*(\phi,a):=\sum_{\sigma^nx=x}e^{S_n\phi(x)}\mathbbm{1}_{[\varphi_a=n]}(x).$$
  The \emph{Gurevich pressure} of $\phi$ is then given by $$P_G(\phi):=\limsup_{n\rightarrow\infty}\frac{1}{n}\log{Z_n(\phi,a)}$$
    which is easily seen to be independent of the state $a$ under topological transitivity, and if the system is topological mixing, the $\limsup$ can be replaced by $\lim$. The \emph{topological entropy} $h_{top}$ of $(\Sigma,\phi)$ is the quantity $P_G(0)$. By the Variational Principle, see \cite[Theorem 2.10]{IomJorTod15} for a version in the generality considered here,
    $$P_G(\phi)=P(\phi):=\sup\left\{h_\nu(\sigma)+\int\phi\,d\nu:\nu\in\mathcal{M}(\Sigma),\int\phi\,d\nu>-\infty\right\},$$
    where $\mathcal{M}(\Sigma)$ is the space of all $\sigma$-invariant Borel probability measures on $\Sigma$. A probability measure is said to be an \emph{equilibrium state} for $\phi$ if it realises the supremum above. If $m$ is a measure such that whenever $A\subset\Sigma$ is measurable and $\sigma:A\to \sigma(A)$ is bijective, $m(\sigma(A))= \int_Ae^{-\phi}~dm$, then $m$ is \emph{$\phi$-conformal}.

    The potential
    $\phi$ is said to be \emph{recurrent} if $\sum_{n\ge1}e^{-nP(\phi)}Z_n(\phi,a)$ diverges and \emph{transient} otherwise. In the recurrent case, $\phi$ is further said to be \emph{positive recurrent }if $\sum_{n\ge1}ne^{-nP(\phi)}Z_n^*(\phi,a)<\infty$, and \emph{null recurrent} otherwise.  Note that there are $(\phi-P(\phi))$-conformal measures when $\phi$ is recurrent, as in \cite[Theorem 1]{Sar01}.

We finish this subsection by highlighting the key definitions of this paper.  The following notion uses inducing: given $a\in \A$, consider $\overline\sigma_a(x) = \sigma^{\varphi_a(x)}(x)$ for $x\in [a]$.  Then the induced version of $\phi$ is $\overline\phi =\overline\phi_a:= \sum_{i=0}^{\varphi_a-1}\phi\circ\sigma^i$.  The pressure of $\overline\phi$ for the induced system $([a], \overline\sigma_a)$ is given as above.

\begin{definition}[Strong Positive Recurrence]\label{def:SPR}
    Following the conventions and notation in \cite{Sar01}, for a given state $a\in\mathcal{A}$ define the quantity 
    $$p_a^*[\phi]:=\sup\left\{p\in\R:P(\overline{\phi+p})<\infty\right\},$$
    $$\Delta_a[\phi]:=P\left(\overline{\phi+p_a^*[\phi]}\right).$$

    The shift system $(\Sigma,\sigma,\phi)$ is said to be \emph{strongly positive recurrent (SPR)} if $\Delta_a[\phi]>0$ for some $a\in\mathcal{A}$. 
        \end{definition}

    Equivalently, for some $a\in\mathcal{A}$,
    \begin{equation}\label{eq:SPRequiv}
        \limsup_{n\rightarrow\infty}\frac{1}{n}\log Z_n^*(\phi,a)<P(\phi),
    \end{equation}
    see for example \cite[Lemma 2.1]{RuhSar22} (note that SPR potentials are also positive recurrent).

Next, we assume our system satisfies a condition that regulates the excursions to infinity.  We first require some more notation.  Here we rewrite $\A= \N$ (while the setup for the following notions do rely on the precise identification of $\A$ with $\N$ here, the limiting quantities obtained from them do not).  Then for each $q\in \N$, define $$[\le q]:=\bigcup_{a=1}^{q}[a],\text{ and }[>q]:=\bigcup_{a>q}[a].$$

\begin{definition}[Entropy and contraction at infinity] \label{def: no escape of mass}
    For each $n$, $M$ and $q$, define the set of $n+1$-cylinders 
    $$B(n,M,q):=\left\{[x_0,\dots,x_n]\in\C_{n+1}:x_0,x_{n}\leq q,\, \#\{x_k\leq q\}\leq \frac{n+1}{M}\right\}$$
    and write $z_n(M, q):= \#B(n,M,q)$.
   The \emph{entropy at infinity} $h_\infty$ as in \cite[Def 1.2]{IomTodVel22} is defined via
    $$h_{\infty}(M,q):=\limsup_{n\rightarrow\infty}\frac{1}{n}\log z_n(M, q),\hspace{3mm}h_{\infty}(q):=\liminf_{M\rightarrow\infty}h_{\infty}(M,q),$$
    $$h_{\infty}:=\liminf_{q\rightarrow\infty}h_{\infty}(q).$$
    Also, define the following quantities: 
    $$z_{\phi, n}(M, q) := \sup \left\{\frac1nS_n\phi(x): x\in B,\text{ for some }B\in B(n,M,q)\right\},$$
$$\delta_{\phi, \infty}(M, q) := \limsup_{n\to\infty}z_{\phi, n}(M, q), \ \delta_{\phi, \infty}(q):= \liminf_{M\to\infty}\delta_{\phi, \infty}(M,q),$$
$$\delta_{\phi, \infty}: = \liminf_{q\to\infty}\delta_{\phi, \infty}(q).$$
Then the system is said to have \emph{contraction at infinity (CI)}, if $\delta_{\phi,\infty}<P(\phi)$.

Notice that, by definition, $h_\infty(M,q)$, $\delta_{\infty,\phi}(M,q)$ are both monotone decreasing in $M$ and $q$, therefore the $\liminf$'s in the definition of $h_\infty$ and $\delta_{\phi,\infty}$ are in fact limits (which includes $-\infty$). 
\end{definition}

Finally we give a topological property of our shift, an idea first introduced in \cite[Definition 4.9]{IomVel21}.

\begin{definition}[The $\F$ property]\label{def: F-property}
 Our system has the \emph{$\F$-property} if for any state $a\in\mathcal{A}$ and every $n\in\N$, the number of periodic points in $[a]$ with period $n$ is finite. \end{definition}

Examples of when the $\F$-property holds include when $(\Sigma, \sigma)$:
\begin{itemize}
\item is locally compact (i.e. for every $i\in\mathcal{A}$, $\sum_{j\in\mathcal{A}}M_{i,j}<\infty$);
\item has $h_{top}<\infty$.
\end{itemize}

From the examples given in Section~\ref{sec:examples}, we can see that a system may satisfy the $\F$-property whilst failing all the above conditions.  Note that if $\phi$ is uniformly bounded from below and $P(\phi)<\infty$ then the $\F$-property must also hold.

\begin{remark}
The main fact we will use about the $\F$-property in this paper is that, by topological transitivity, given $q\in \N$ and $N\in \N$, there are finitely many paths starting and ending in $[\le q]$ of length $N$.
\end{remark}

\begin{remark}
If the $\F$-property did not hold then the notion of entropy at infinity here does not make sense in the following way.  In this case there will be some $q\in \N$ with infinitely many loops of some length $N$ based at $q$.  Thus there will be infinitely many 1-cylinders of arbitrarily high index $q'\in \N$ which, when $M>N$ will not feature in $B(n, M, q)$: hence $h_\infty$ does not see these cylinders at all, indeed $z_n(M, q)$ could be zero for all large $n, M, q$.  This issue would also appear in the definition of $h_\infty$ in \cite{Buz10}.

We also note that, on the other hand, when the $\F$-property does hold then for any $q, M$, if $n$ is large enough then $B(n, M, q)\neq \emptyset$.  
\label{rem:Fprop}
\end{remark}

\subsection{Main Theorems}

\begin{theorem}\label{thm:contract}
    Let $(\Sigma,\phi)$ be a topologically transitive CMS with the $\F$-property, $\phi$ a potential with summable variations and the pressure $P(\phi)<\infty$. Then the following are equivalent.
    \begin{enumerate}
    \item[(UCS)]  \label{UCS}Uniformly contracting structure:
    $$\chi_{per}(\phi):= \sup\left\{\frac1nS_n\phi(x): \sigma^n(x) = x\right\}<P(\phi);$$
    \item[(CRC)] \label{CRC} Compact returns contract: Given $q\in \N$, there exist $C_q\in\mathbbm{R}$ and $\lambda_q>P(\phi)$ such that: for all $n\in\N$, if $x\in \Sigma$ has $x_0, x_n\le q$ then $$S_n\phi(x)\le  C_q - n\lambda_q;$$
    \item[(CI)] \label{CI}Contraction at infinity: $\delta_{\phi, \infty}<P(\phi)$.
    \end{enumerate}
\end{theorem}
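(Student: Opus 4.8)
Throughout I normalise $P(\phi)=0$, replacing $\phi$ by $\phi-P(\phi)$: each of the three conditions shifts by the same $P(\phi)$ and so is unaffected, and the hypotheses persist. I will establish the cycle (UCS)$\Rightarrow$(CRC)$\Rightarrow$(CI)$\Rightarrow$(UCS). Two facts are used repeatedly. First, summable variations give a uniform distortion bound: for any admissible word $w$ of length $n$ and $x,y\in[w]$ one has $|S_n\phi(x)-S_n\phi(y)|\le B_\phi+var_1(\phi)$. Second, by topological transitivity together with the $\F$-property (the Remark), for each $q$ there are finitely many admissible words of bounded length joining two states of $[\le q]$, so one may fix such connecting words with lengths $\le R_q$ and $|S\phi|\le D_q$. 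For (UCS)$\Rightarrow$(CRC): given a segment $x_0\dots x_n$ with $x_0,x_n\le q$, append a fixed connecting word from $x_n$ to $x_0$ to form a periodic orbit $\pi$ of period $N\le n+R_q$; by (UCS) with gap $\eta:=-\chi_{per}(\phi)>0$ its average is $\le-\eta$, and subtracting the bounded connecting contribution and applying the distortion bound yields $S_n\phi(x)\le C_q-n\eta$ for all $x\in[x_0,\dots,x_n]$, which is (CRC) with $\lambda_q=\eta$. For (CRC)$\Rightarrow$(CI): every cylinder in $B(n,M,q)$ has first and last symbols $\le q$, so (CRC) gives $\tfrac1nS_n\phi\le \tfrac{C_q}{n}-\lambda_q$ on it; hence $\delta_{\phi,\infty}(M,q)\le-\lambda_q$ and, by the stated monotonicity, $\delta_{\phi,\infty}\le-\lambda_q<0=P(\phi)$.

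The implication (CI)$\Rightarrow$(UCS) is the main step and I argue by contraposition. Since periodic measures have zero entropy, the variational principle gives $\chi_{per}(\phi)\le P(\phi)$ always, so failure of (UCS) means $\chi_{per}(\phi)=0$; choose periodic orbits $p_k$ of period $n_k$ with $\tfrac1{n_k}S_{n_k}\phi(p_k)\to0$. The key preliminary is the following lemma: for every finite $Q$ the maximal periodic average inside $[\le Q]$ satisfies $\alpha_Q<P(\phi)$. Indeed $\alpha_Q\le P_{[\le Q]}(\phi)$, and transitivity together with infiniteness produces at least one first-return loop to $[\le Q]$ passing through $[>Q]$ of finite potential; adjoining such a recurrent loop strictly increases the Gurevich pressure (the CMS analogue of the strict Perron--Frobenius monotonicity of the spectral radius under enlargement of an irreducible matrix), so $P_{[\le Q]}(\phi)<P(\phi)$ and hence $\alpha_Q<P(\phi)$. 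A standard closing-up argument inside the finite transitive subshift $[\le Q]$ then upgrades this to $S_\ell\phi\le \ell\,\alpha_Q+C_0(Q)$ for all orbit segments of length $\ell$ contained in $[\le Q]$.

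Now suppose, for contradiction, that (CI) holds: fix $Q',M,\beta>0,N_0$ so that every cylinder in $B(n,M,Q')$ with $n\ge N_0$ has average $\le-\beta$. Decompose each $p_k$ into its first-return excursions to $[\le Q']$ and split these into long returns, of length $\ge L$ for a threshold $L\ge\max(2M,N_0)$, and runs of short returns. A long return has interior entirely in $[>Q']$ with only its two endpoints in $[\le Q']$, so it is a cylinder in $B(\cdot,M,Q')$ and contracts at rate $\beta$. A maximal run of short returns stays in a fixed finite set $[\le Q'']$ with $Q''=Q'+L$, so as a compact orbit segment it contracts at rate $-\alpha_{Q''}>0$ by the lemma. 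Summing the two contributions, with the number of long returns bounded by (total long length)$/L$, gives $\tfrac1{n_k}S_{n_k}\phi(p_k)\le-\gamma+o(1)$ for a fixed $\gamma>0$, contradicting $\tfrac1{n_k}S_{n_k}\phi(p_k)\to0$. Hence (CI) fails, which completes the contrapositive and the cycle.

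The main obstacle is the final bookkeeping: each short-return block carries an additive transition constant (from the endpoints in $[\le Q']$ and the distortion), and one must choose the thresholds $L$ and $Q''$ and exploit summable variations and the $\F$-property so that the total of these constants is dominated by the contraction extracted from the long excursions and the compact blocks. This is precisely where the quantitative form of contraction at infinity is needed rather than a purely qualitative escape statement, and it is the analytic heart of the argument; the remaining estimates are routine applications of the distortion bound and the finiteness supplied by the $\F$-property.
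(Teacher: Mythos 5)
Your two easy implications are fine and coincide with the paper's: (UCS)$\Rightarrow$(CRC) is the closing-up argument of \Cref{lem:UCS-CRC}, and (CRC)$\Rightarrow$(CI) is the same one-line observation. One caution there: under summable variations alone $var_1(\phi)$ may be infinite (the sum defining $B_\phi$ starts at $k=2$), so your ``uniform distortion bound'' $B_\phi+var_1(\phi)$ is not available; one must match $n+1$ symbols to control an $n$-term Birkhoff sum, which is exactly why the paper's connecting word $w(x_n,x_0)$ is chosen to begin with the symbol $x_n$.

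For (CI)$\Rightarrow$(UCS) your route is genuinely different from the paper's, but it has two real gaps. First, your key preliminary lemma ($\alpha_Q<P(\phi)$ for every finite $Q$) is true, but the justification you give --- that adjoining a recurrent loop strictly increases the Gurevich pressure, by analogy with Perron--Frobenius --- is false for countable Markov shifts: in the transient case the pressure is pinned at the abscissa of convergence of $\sum_n Z_n^*(\phi,a)e^{-np}$ and is insensitive to adding or removing loops that do not move that abscissa. The honest proof of your lemma is that $\alpha_Q=P(\phi)$ would force a zero-entropy (or compactly supported) equilibrium state, contradicting the uniqueness and positive entropy of equilibrium states for summable-variation potentials of finite pressure \cite{Sar15}; this is precisely the content of the paper's \Cref{lem:noeq}, so you have not bypassed that input, only concealed it. Second, the ``final bookkeeping'' you explicitly defer is where the argument can actually fail. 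A short excursion can visit arbitrarily high states, so your bound $Q''=Q'+L$ is wrong (the correct bound comes from the $\F$-property via the finiteness of the set of return words of length $<L$, as in the paper's $q'(q,N)$); the restricted subshift on $[\le Q'']$ need not be transitive, so your ``standard closing-up inside $[\le Q]$'' is not available for arbitrary segments; and if the per-block additive constant depends on $Q''$ it depends on $L$, so it cannot be beaten simply by taking $L$ large --- a genuine circularity. The fix is to close up each maximal short-return block at its two endpoints, which both lie in $[\le Q']$, so the additive cost $2B_\phi-\underline{C}_{Q'}(\phi)$ depends only on $Q'$ and $L$ may then be chosen to dominate it; none of this appears in your write-up, and you yourself call it the analytic heart of the argument. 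The paper sidesteps both difficulties at once: it replaces $\phi$ by a cohomologous non-positive potential $\phi'$ (so all non-excursion blocks contribute $\le 0$ with no constants, cf.\ \Cref{lem:phi'}), shows via the proportion function $\zeta$ that the time spent in long excursions must tend to zero, and then derives a contradiction from a cylinder-topology limit of the periodic measures in \Cref{lem:noeq} rather than from a quantitative gap $\alpha_Q<0$.
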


All of the conditions here are new in this context as far as we are aware.  As mentioned in the introduction, (\hyperref[UCS]{UCS}) can be compared to $\chi_{per}>0$ in \cite{NowSan98} since in that case $\phi=-\log|Df|$, so $\chi_{per}=-\chi_{per}(\phi)$,  and moreover $P(\phi)=0$.  We also note that it is easy to show that $\chi_{per}(\phi)=\sup\{\int\phi~d\mu:\mu\in \M(\Sigma)\}$.  We use the word `contraction' for our potential in line with \cite{LivSauVai98}.
If the assumptions of \Cref{thm:contract} are slightly strengthened, then the following holds.

\begin{theorem}\label{thm:UCS-SPR}
Let $(\Sigma,\sigma,\phi)$ be a topologically transitive CMS, $\phi$ a potential with summable variations such that $P(\phi)<\infty$, and entropy at infinity $h_{\infty}=0$. 
 Then (\hyperref[UCS]{UCS}) holds if and only if \hyperref[def:SPR]{SPR} holds.
\end{theorem}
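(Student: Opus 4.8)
The plan is to pass through the equivalent condition (CI) and the renewal structure of first returns to $a$. By \Cref{thm:contract}, (UCS) is equivalent to (CI), i.e.\ $\delta_{\phi,\infty}<P(\phi)$ (the $\F$-property required to invoke that theorem is implicit in $h_\infty=0$ being well defined, see \Cref{rem:Fprop}). Also, by \eqref{eq:SPRequiv}, SPR is equivalent to the radius of convergence $\rho^*$ of $\sum_{n}Z_n^*(\phi,a)z^n$ satisfying $\rho^*>e^{-P(\phi)}$. So it suffices to show that, under $h_\infty=0$, one has $\delta_{\phi,\infty}<P(\phi)$ if and only if $\rho^*>e^{-P(\phi)}$, for some fixed state $a$.

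Fix $a$ and a threshold $q\ge a$. I would first set up the finite excursion matrix: for $b,c\le q$ and $\ell\ge1$ let $W_\ell(b,c)=\sum e^{S_\ell\phi}$, summed over admissible length-$\ell$ paths from $b$ to $c$ whose interior lies in $[>q]$, and put $A(z)_{b,c}=\sum_{\ell\ge1}W_\ell(b,c)z^\ell$, a $q\times q$ matrix of power series, irreducible by transitivity. Decomposing any loop at $a$ into excursions between successive visits to $[\le q]$ yields the standard renewal identities
\begin{equation*}
1+\sum_{n\ge1}Z_n(\phi,a)z^n=\bigl[(I-A(z))^{-1}\bigr]_{a,a},\qquad \sum_{n\ge1}Z_n^*(\phi,a)z^n=1-\frac{\det(I-A(z))}{\det(I-A_{\widehat a}(z))},
\end{equation*}
where $A_{\widehat a}$ deletes the row and column of $a$. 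Since $\limsup_n\tfrac1n\log Z_n(\phi,a)=P(\phi)$, the left-hand series has radius $e^{-P(\phi)}$, so the radius $\rho_A$ of the entries of $A$ satisfies $\rho_A\ge e^{-P(\phi)}$, and the second identity shows $\rho^*\le\rho_A$.

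The analytic heart of the matter is the estimate $\rho_A>e^{-P(\phi)}$, equivalently $\limsup_\ell\tfrac1\ell\log W_\ell(b,c)<P(\phi)$ for each $b,c\le q$. A pure excursion of length $\ell\ge 2M$ meets $[\le q]$ only at its two endpoints, hence is one of the cylinders counted in $B(\ell,M,q)$; so $W_\ell(b,c)\le e^{B_\phi}\,z_\ell(M,q)\,e^{\ell z_{\phi,\ell}(M,q)}$ (shorter excursions are finite in number and negligible for the limit). Taking $\limsup_\ell$ and then $M\to\infty$ gives $\limsup_\ell\tfrac1\ell\log W_\ell(b,c)\le h_\infty(q)+\delta_{\phi,\infty}(q)$, and using $h_\infty=0$ together with (CI) one fixes $q$ so large that the right-hand side is $<P(\phi)$. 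This is the only place both hypotheses are genuinely used, and I expect it to be the main obstacle: one must reconcile the order of the limits in $M$ and $q$ defining $h_\infty,\delta_{\phi,\infty}$ with the fixed-$q$ radius $\rho_A$, and control the summable-variation distortion along excursions.

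Given $\rho_A>e^{-P(\phi)}$, the forward direction ((CI)$\Rightarrow$SPR) closes by Perron--Frobenius. The entries of $A$ are finite at $z=e^{-P(\phi)}$ while the first generating function diverges there, so $\det(I-A(e^{-P(\phi)}))=0$ and the irreducible nonnegative matrix $A(e^{-P(\phi)})$ has spectral radius $1$; its \emph{proper} principal submatrix $A_{\widehat a}(e^{-P(\phi)})$ then has spectral radius strictly below $1$, so $\det(I-A_{\widehat a})\ne0$ at $e^{-P(\phi)}$. Combined with $\rho_A>e^{-P(\phi)}$, the second renewal identity forces $\rho^*>e^{-P(\phi)}$, i.e.\ SPR. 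For the converse (SPR$\Rightarrow$(CI)) I would run this backwards: SPR gives a single $s<P(\phi)$ with $\rho^*=e^{-s}$, and since $\rho^*\le\rho_A$ for \emph{every} $q\ge a$ one obtains $\limsup_\ell\tfrac1\ell\log W_\ell(b,c)\le s$ uniformly in $q$. Decomposing a cylinder in $B(n,M,q)$ into its excursions (each of average $\le s+o(1)$) and at most $(n+1)/M$ compact steps (each of $\phi$-value bounded by a constant $D_q<\infty$, finiteness using $var_2(\phi)<\infty$) yields $\delta_{\phi,\infty}(q)\le s$, whence $\delta_{\phi,\infty}\le s<P(\phi)$ so (CI) holds. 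Finally \Cref{thm:contract} upgrades (CI) back to (UCS), completing the equivalence.
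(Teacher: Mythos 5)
Your single-excursion estimate (an excursion of length $\ell\ge 2M$ meeting $[\le q]$ only at its endpoints is counted in $B(\ell,M,q)$, whence $\limsup_\ell\tfrac1\ell\log W_\ell(b,c)\le h_\infty(M,q)+\delta_{\phi,\infty}(M,q)$) is exactly the paper's Lemma~\ref{lem:few_rets}, and your converse direction (SPR $\Rightarrow$ (CI) by splitting a $B(n,M,q)$-cylinder into at most $(n+1)/M$ excursions, each of average $\le s+o(1)$, then invoking Theorem~\ref{thm:contract}) is sound and genuinely different from the paper's Proposition~\ref{SPR implies UCS}, which instead argues by contradiction with a sequence of periodic points and Lemma~\ref{lem:noeq}; your version only needs upper bounds by per-excursion suprema, so no distortion accumulates there.

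The gap is in the forward direction, in the renewal identities
$1+\sum_nZ_n(\phi,a)z^n=[(I-A(z))^{-1}]_{a,a}$ and $1-\sum_nZ_n^*(\phi,a)z^n=\det(I-A(z))/\det(I-A_{\widehat a}(z))$. These are exact only for locally constant potentials. For a potential with summable variations, decomposing a first-return loop of length $n$ into its $p$ excursions between visits to $[\le q]$ and replacing each excursion's Birkhoff sum by the supremum over its cylinder costs a multiplicative factor of order $e^{B_\phi}$ \emph{per excursion} (and your excursion bound itself carries an additive constant $K_q$ per excursion, as in Lemma~\ref{lem:few_rets}). A first-return loop to $[a]$ can visit $[\le q]\setminus[a]$ on the order of $n$ times, so these errors accumulate to $e^{cn}$ and destroy precisely the exponential rate you are trying to compare: after distortion you can only conclude that the spectral radius of $A_{\widehat a}(1)$ is below $e^{B_\phi}$ rather than below $1$, which does not yield $\rho^*>e^{-P(\phi)}$. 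This is not a technicality to be "reconciled" at the end: it is the central difficulty, and the paper's proof of Theorem~\ref{thm:CI-SPR} is built around it. There, one works with the conformal measure $m_Y$ of the induced first-return system on $Y=[\le q]$ (conformality and $var_1\hat\phi_Y<\infty$ give a \emph{single} distortion constant over $F$-cylinders of arbitrary depth), and the set $\{\varphi_a=n\}$ is split according to whether the orbit makes more or fewer than $\gamma n$ returns to $Y$: orbits with many returns are handled by the exponential estimate of \Cref{claim:multavoid} for the BIP induced system, while orbits with few returns are handled by the $B(n,M,q)$ counting, with the residual per-excursion factor $C_1^{\gamma n}$ absorbed by choosing $\gamma$ small. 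Your proposal contains no analogue of this two-case split, and without it the Perron--Frobenius step ("$A_{\widehat a}(e^{-P(\phi)})$ has spectral radius strictly below $1$, hence $\rho^*>e^{-P(\phi)}$") does not transfer from the distorted matrix to the actual sums $Z_n^*(\phi,a)$.
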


Note that we are also able to prove related results when $h_\infty>0$, such as Theorem~\ref{thm:CI-SPR}

Theorem~\ref{thm:UCS-SPR} provides a new characterisation of SPR which, in the weakly H\"older potential case, is known to be equivalent to various other conditions, for example the spectral gap property, which gives exponential decay of correlations/mixing (see \cite{CyrSar09}).  We also note that in \cite[Theorem 2.2']{CyrSar09} \hyperref[def:SPR]{SPR} was shown to be open and dense in appropriate topologies.  So in the setting of the above theorem  (\hyperref[UCS]{UCS}) is also open and dense.  We observe that openness of such a condition is elementary, but denseness requires more work.
 
 \subsection{Other conditions on contraction}
 \label{ssec:CEetc}
As in the introduction, inspired by, for example \cite{NowSan98}, one might propose further conditions and hope that they are equivalent to those in our main theorems.   Here we suppose $P(\phi)=0$.

The following conditions involve hitting compact parts

\begin{itemize}

\item[(A)]\label{A} There exist $q\in \N$, $C\in\mathbbm{R}$, $\eps>0$ such that if $x\in \Sigma$ has $x_{n}\le q$ then $S_n\phi(x)\le  C - n\eps$.

\item[(B)]\label{B} There exist $q\in \N$, $C\in\mathbbm{R}$, $\eps>0$ such that if $x\in \Sigma$ has $x_0\le q$, then $S_n\phi(x)\le  C - n\eps$.

\end{itemize}

Here (\hyperref[A]{A}) can be interpreted as `orbits which land in a compact part experience contraction' and (\hyperref[B]{B}) can be interpreted as `orbits which start in a compact part experience contraction'.

The following condition involves avoiding compact parts

\begin{itemize}

\item[(C)]\label{C} There exist $q\in \N$, $C\in\mathbbm{R}$, $\eps>0$ such that if $x\in \Sigma$ has $x_k> q$ for $k=0, \ldots, n-1$, then $S_n\phi(x)\le  C - n\eps$.

\end{itemize}

Supposing there is a $\phi$-conformal measure $m_\phi$, we might also hope to show that measures of cylinders decay exponentially: below we give a weak condition of this type, which by conformality of $m_\phi$, a measure we assume to exist here, is essentially equivalent to (\hyperref[A]{A}) above.

\begin{itemize}

\item[(D)]\label{D} There exist $q\in \N$,  $\lambda>1$ and $K>0$ such that for all $q'\le q$, if $C_n\in \P_n$ has $\sigma^n(C_n) =[q']$, then $m_\phi(C_n)\le K\lambda^{-n}$;

\end{itemize}

Finally we might try to get contraction like in (CI), but not assuming that the orbits have to both start and end in a compact part $[\le q]$: for example only end in  $[\le q]$ which is related to condition (\hyperref[A]{A}) above, or only start in $[\le q]$, which is related to condition (\hyperref[B]{B}) above.

The above conditions can be shown to imply (\hyperref[UCS]{UCS}), but we show in Section~\ref{ssec:UCSweak} that they are strictly stronger, i.e., we have examples which satisfy (\hyperref[UCS]{UCS}), but fail all of these conditions.  We also note that the conditions of \cite{LivSauVai98} may fail, but  (\hyperref[UCS]{UCS}) hold.

\section{Proof of Theorem \ref{thm:contract}}
\label{sec:thmcontr}

Throughout this paper, since we are always assuming $P(\phi)<\infty$, by subtracting a constant from $\phi$ we can assume without loss of generality that $P(\phi)=0$.  Moreover, from here on, unless stated otherwise, we assume our alphabet $\A$ is $\N$.

Given $a, b\in \N$, define 
$$\ell(a,b):=\min\left\{k:\exists w\in\Sigma_k,[w]_0=a\text{ and } wb\in \Sigma_{k+1}\right\}.$$
This is finite by topological transitivity.
Given $q\in \N$, let
$$\ell(q):=\sup_{a, b\le q}\ell(a, b),$$
and note that this is also finite.

By topological transitivity, for any $x\in \Sigma$ and $n\in \N$, there is $\ell=\ell(x_n,x_0)\geq0$ and we can pick word $w(x_n,x_0)\in[x_n]$ of length $\ell$ such that the following concatenation is allowed
\begin{equation}
{w}(x_n,x_0)x_0, \dots,x_{n-1}\in \Sigma_{n+\ell},\label{eq:perlink}
\end{equation}
which are the first $n+\ell$ symbols of a periodic point $z=\left(\overline{{w}(x_n,x_0)x_0,\ldots, x_{n-1}}\right)$.    

\begin{lemma}
Suppose that $\phi$ has summable variations. Defining 
$$ \underline{C}_q(\phi):=\min_{a,b\leq q}\inf_{y\in[w(a,b)b]}\left\{S_{\ell(a, b)}\phi(y)\right\},$$
we have $ \underline{C}_q(\phi)>-\infty$.  
\label{lem:finitelink}
\end{lemma}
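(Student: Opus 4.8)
The plan is to reduce the claim to a statement about a single pair of states and then to control the oscillation of $S_{\ell(a,b)}\phi$ over the relevant cylinder using summable variations. Since the defining expression for $\underline{C}_q(\phi)$ is a minimum over the \emph{finite} set of pairs $(a,b)$ with $a,b\le q$, it suffices to prove that for each fixed such pair the inner infimum $\inf_{y\in[w(a,b)b]}S_{\ell(a,b)}\phi(y)$ is finite; a finite minimum of finite quantities is then automatically finite.

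So fix $a,b\le q$ and abbreviate $w=w(a,b)$, $\ell=\ell(a,b)$. First I would note that $\ell\ge 1$ since $w$ must carry a zeroth letter, and that the concatenation $wb$ is admissible by construction, so the cylinder $[wb]$ (of length $\ell+1$) is nonempty. I then pick any reference point $y^*\in[wb]$. Because $\phi$ is real-valued, $S_\ell\phi(y^*)=\sum_{i=0}^{\ell-1}\phi(\sigma^i y^*)$ is a finite sum of finite numbers, hence finite; the task is to show this finiteness is not destroyed when passing to the infimum over $[wb]$.

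The main step is a uniform bound on the deviation of $S_\ell\phi(y)$ from $S_\ell\phi(y^*)$ as $y$ ranges over $[wb]$. For $0\le i\le \ell-1$ the points $\sigma^i y$ and $\sigma^i y^*$ agree in their first $\ell+1-i\ge 2$ coordinates, so by the definition of the $k$-th variation, $|\phi(\sigma^i y)-\phi(\sigma^i y^*)|\le var_{\ell+1-i}(\phi)$. Summing over $i$ and reindexing by $j=\ell+1-i$ gives
$$\left|S_\ell\phi(y)-S_\ell\phi(y^*)\right|\le \sum_{i=0}^{\ell-1}var_{\ell+1-i}(\phi)=\sum_{j=2}^{\ell+1}var_j(\phi)\le B_\phi,$$
which is finite by the summable variations hypothesis. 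Hence $S_\ell\phi(y)\ge S_\ell\phi(y^*)-B_\phi>-\infty$ for every $y\in[wb]$, so $\inf_{y\in[wb]}S_\ell\phi(y)$ is finite. Taking the minimum over the finitely many pairs $a,b\le q$ then yields $\underline{C}_q(\phi)>-\infty$.

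There is no genuine obstacle here; the proof is a short oscillation estimate. The only points requiring care are the nonemptiness of $[wb]$ (immediate from admissibility) and, more importantly, the observation that summable variations bound the \emph{total} oscillation of the Birkhoff sum over a single cylinder by the one constant $B_\phi$, \emph{independently} of $\ell$. It is precisely this uniformity — rather than a mere bound on each summand — that makes the infimum finite, and it is the crux I would emphasise.
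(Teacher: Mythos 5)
Your proof is correct and follows essentially the same route as the paper's, which simply notes that there are finitely many pairs $a,b\le q$ and that $var_2(\phi)<\infty$ already bounds $S_{\ell(a,b)}\phi$ on each cylinder $[w(a,b)b]$. One small remark: the uniformity in $\ell$ that you single out as the crux is not actually needed for this lemma, since $\ell(a,b)$ is fixed and finite for each of the finitely many pairs, so the cruder bound $\ell(a,b)\cdot var_2(\phi)$ on the oscillation suffices.
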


\begin{proof}
This follows since there are finitely many word pairs $a, b\le q$ to consider and since also $var_2(\phi)<\infty$, this means that $S_{\ell(a, b)}\phi$ is bounded on each  $[{w}(a,b)b]$.
 \end{proof}

\begin{lemma} For a topologically transitive CMS, $\phi$ of summable variations, (\hyperref[UCS]{UCS}) implies (\hyperref[CRC]{CRC}).
\label{lem:UCS-CRC}
\end{lemma}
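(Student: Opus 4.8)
The plan is to transfer the uniform contraction that (\hyperref[UCS]{UCS}) gives on \emph{periodic} orbits to arbitrary orbit segments that begin and end in the compact piece $[\le q]$, using the closing construction already recorded in \eqref{eq:perlink}. Since $P(\phi)=0$, the hypothesis reads $\chi_{per}(\phi)<0$. Fix $q\in\N$ and take any $x\in\Sigma$ with $x_0,x_n\le q$. Applying \eqref{eq:perlink} to the pair $(x_n,x_0)$ produces a word $w(x_n,x_0)$ of length $\ell:=\ell(x_n,x_0)\le\ell(q)$ whose first symbol is $x_n$, and hence a periodic point which, after a cyclic shift, we take to be
$$z=\left(\overline{x_0,\dots,x_{n-1},w(x_n,x_0)}\right)$$
of period $n+\ell$; admissibility of the loop is exactly the facts that $M_{x_{n-1},x_n}=1$ and that $w(x_n,x_0)x_0$ is allowed.

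First I would apply (\hyperref[UCS]{UCS}) to $z$, giving $S_{n+\ell}\phi(z)\le (n+\ell)\chi_{per}(\phi)\le n\chi_{per}(\phi)$, where the last inequality uses $\ell\ge 0$ together with $\chi_{per}(\phi)<0$. Next I would split the periodic sum along the loop as $S_{n+\ell}\phi(z)=S_n\phi(z)+S_\ell\phi(\sigma^n z)$, and observe that $\sigma^n z$ lies in the cylinder $[w(x_n,x_0)\,x_0]$ with $x_n,x_0\le q$; hence \Cref{lem:finitelink} bounds the linking term from below, $S_\ell\phi(\sigma^n z)\ge\underline{C}_q(\phi)>-\infty$. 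Rearranging yields $S_n\phi(z)\le n\chi_{per}(\phi)-\underline{C}_q(\phi)$.

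It then remains to replace $z$ by $x$ in the first $n$ terms, and this is the step I expect to need the most care. The key point is that $z$ and $x$ agree not merely on the first $n$ coordinates but on the first $n+1$, because $z_n=[w(x_n,x_0)]_0=x_n$. Consequently $\sigma^i z$ and $\sigma^i x$ share their first $n-i+1$ symbols for each $0\le i\le n-1$, so summable variations give
$$|S_n\phi(z)-S_n\phi(x)|\le \sum_{i=0}^{n-1}var_{n-i+1}(\phi)=\sum_{k=2}^{n+1}var_k(\phi)\le B_\phi.$$
This extra shared coordinate is essential: it confines the comparison to the summable tail $\sum_{k\ge2}var_k(\phi)$ and avoids the possibly infinite term $var_1(\phi)$, which is precisely why the loop is closed through $x_n$ (so that $z_n=x_n$) rather than only through $x_0$.

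Combining the two estimates gives $S_n\phi(x)\le n\chi_{per}(\phi)+B_\phi-\underline{C}_q(\phi)$ for every such $x$ and every $n$. Setting $\lambda_q:=-\chi_{per}(\phi)>0=P(\phi)$ and $C_q:=B_\phi-\underline{C}_q(\phi)$ then delivers exactly (\hyperref[CRC]{CRC}); note that $\lambda_q$ is in fact independent of $q$, while $C_q$ records the $q$-dependence through the linking constant $\underline{C}_q(\phi)$.
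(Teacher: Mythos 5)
Your proposal is correct and follows essentially the same route as the paper: close the segment into a periodic orbit using the linking word from \eqref{eq:perlink}, apply (\hyperref[UCS]{UCS}) to that periodic point, and absorb the errors via $\underline{C}_q(\phi)$ from \Cref{lem:finitelink} and the summable-variations constant $B_\phi$. The only differences are cosmetic (a cyclically shifted base point for $z$, and slightly sharper constants than the paper's $2B_\phi$ and $-2\lambda_q$ safety margins), and your explicit observation that $z$ and $x$ share $n+1$ coordinates, so only $\sum_{k\ge 2}var_k(\phi)$ is needed, is a nice clarification of a step the paper leaves implicit.
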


\begin{proof}
    Let $q\in\N$ be given and pick $\lambda_q>0$ small enough such that $\chi_{per}(\phi)<-2\lambda_q<0$.
    Suppose $x\in\Sigma$ has $x_0, x_n\le q$ for $n\ge 1$.    
    Then for $z=\left(\overline{{w}(x_n,x_0)x_0,\ldots, x_{n-1}}\right)$ defined as above,
    \begin{align*}
    \underline C_q(\phi)+S_n\phi(x)&\leq S_{n+\ell}\phi(z)+2B_\phi<2B_\phi-\left(n+\min_{a,b\le q}\ell(a,b)\right)\lambda_q,
    \end{align*}
    where $\underline C_q(\phi)>-\infty$ as in \Cref{lem:finitelink}.  Hence $S_n\phi(x)<C_q-n\lambda_q$  where $C_q=\max\left\{0,2B_\phi-\underline{C}_q(\phi)\right\}$.    
\end{proof}

It is immediate that 
    (\hyperref[CRC]{CRC}) implies (\hyperref[CI]{CI}) since (\hyperref[CRC]{CRC}) does not impose any conditions on $x_j$ for $j\in[1,n-1]$, so it only remains to show that (\hyperref[CI]{CI}) implies (\hyperref[UCS]{UCS}) to complete the proof of Theorem~\ref{thm:contract}. 

The following lemma gives us a way of going from proofs of the results in this paper for non-positive potentials to general potentials $\phi$.

\begin{lemma}
There exists $\rho:\Sigma\to \R$ bounded on each 1-cylinder such that for $\phi':=\phi+\log\rho-\log\rho\circ\sigma$, we have $\phi'\le 0$.  Moreover, $\phi'$ has summable variations (or is weakly H\"older) if $\phi$ has summable variations (or is weakly H\"older).
\end{lemma}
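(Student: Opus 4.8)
The plan is to construct $\rho$ as a cohomological correction that absorbs the positive part of $\phi$ into a coboundary. The key observation is that $\chi_{per}(\phi) = \sup\{\int \phi\,d\mu : \mu \in \M(\Sigma)\}$, and by the assumption $P(\phi) = 0$ (made without loss of generality) together with the variational principle, we have $\int \phi\,d\mu \le 0$ for all $\mu$; the obstruction to $\phi \le 0$ pointwise is purely a matter of the coboundary gauge. Concretely, I would fix a reference state, say $1 \in \A = \N$, and for each $x \in \Sigma$ define $\log \rho(x)$ by comparing the Birkhoff sum of $\phi$ along a transitive path from $x$ back to $[1]$ against its value on an associated periodic point, exploiting the link words $w(x_n, x_0)$ already introduced in \eqref{eq:perlink}. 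The natural candidate is something like
\begin{equation*}
\log\rho(x) := \sup_{n\ge 0} S_n\phi(x),
\end{equation*}
or a regularised version thereof, so that $\rho \ge 1$ and the telescoping identity $\phi' = \phi + \log\rho - \log\rho\circ\sigma$ yields $\phi' \le 0$ directly from $\log\rho(x) \ge \phi(x) + \log\rho(\sigma x)$.

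First I would verify that this supremum is finite on each point: since $(\hyperref[UCS]{UCS})$ gives $S_n\phi(x) \le C_q - n\lambda_q$ whenever $x_0, x_n \le q$, and more generally $\limsup_n \frac1n S_n\phi(x) \le \chi_{per}(\phi) < 0$ uniformly, the sequence $S_n\phi(x)$ is bounded above, so the supremum defining $\log\rho$ is a finite real number. Second, I would check the coboundary inequality: by splitting off the first term, $\sup_{n\ge 0}S_n\phi(x) = \max\{0, \phi(x) + \sup_{n\ge 1}S_{n-1}\phi(\sigma x)\}$, which gives $\log\rho(x) \ge \phi(x) + \log\rho(\sigma x)$, i.e. $\phi'(x) \le 0$, and the $\max$ with $0$ keeps $\rho \ge 1$. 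Third, boundedness of $\rho$ on each $1$-cylinder follows because on $[a]$ the quantity $S_n\phi(x)$ is controlled by $\phi(a)$-type bounds plus the distortion constant $B_\phi$, using $var_2(\phi) < \infty$ to compare nearby points.

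The main work is the regularity claim: showing $\phi'$ has summable variations (resp.\ is weakly H\"older) whenever $\phi$ does. Since $\phi'$ differs from $\phi$ by $\log\rho - \log\rho\circ\sigma$, it suffices to estimate $var_k(\log\rho)$. For two points $x, y$ agreeing on the first $k$ coordinates, I would compare $S_n\phi(x)$ and $S_n\phi(y)$ term by term: for $n \le k$ the partial sums differ by at most $\sum_{j<n}var_{k-j}(\phi)$, while for $n > k$ the shared prefix forces the first $k$ terms to be close and the tail contributes a uniformly controlled amount because of the exponential decay of $S_n\phi$. Comparing the two suprema then yields $var_k(\log\rho) \lesssim \sum_{j\ge k}var_j(\phi)$ in the summable case and $var_k(\log\rho) \lesssim \theta^k$ in the weakly H\"older case, whence $\phi'$ inherits the corresponding regularity.

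I expect the delicate point to be the interchange of the supremum with the variation estimate when the two points $x,y$ have suprema attained at different indices $n$, since $\log\rho$ is defined by a sup rather than a smooth expression; the standard device is the inequality $|\sup_n a_n - \sup_n b_n| \le \sup_n |a_n - b_n|$, which reduces everything to a uniform-in-$n$ comparison of $S_n\phi(x)$ and $S_n\phi(y)$. The uniformity is exactly what summable variations guarantees, but one must be careful that the tail contributions (for $n$ large relative to $k$) do not accumulate; here the negativity $\limsup \frac1n S_n\phi < 0$ ensures the suprema are effectively attained at bounded $n$, which tames the tail and closes the argument.
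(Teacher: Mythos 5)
Your construction $\log\rho(x):=\sup_{n\ge0}S_n\phi(x)$ is a genuinely different route from the paper's (which takes $\rho$ to be the eigenfunction of the transfer operator $\mathcal{L}_\phi$ in the recurrent case and $\rho=\sum_{n\ge1}\mathcal{L}_\phi^n\mathbbm{1}_{[a]}$ in the transient case, following \cite[Lemma 1]{Sar01}, so that $\mathcal{L}_\phi\rho\le\rho$ immediately gives $\phi'\le0$), but it has a gap that I do not think can be repaired. First, your finiteness argument invokes (UCS), which is not a hypothesis of this lemma and cannot be: the lemma feeds into Lemma~\ref{lem:phi'}, Lemma~\ref{lem:noeq} and Proposition~\ref{prop:CI-UCS}, i.e.\ precisely into the proof that (CI) implies (UCS), and into the proof of Proposition~\ref{SPR implies UCS} where (UCS) is assumed to fail. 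So the argument is circular where the lemma is actually used. Second, even granting (UCS), the assertion that $\limsup_n\frac1nS_n\phi(x)\le\chi_{per}(\phi)<0$ for \emph{every} $x$ is false: it holds for periodic points by definition, but a non-returning orbit is not controlled by periodic data. Concretely, take a bouquet shift with one loop of each length $\ell$, and put $\phi=+1$ on the first half of each loop and a compensating large negative value on the second half so that the induced potential on the root equals $-\ell$ on the loop of length $\ell$; then $P(\phi)$ is finite, $\phi$ has summable variations, $\chi_{per}(\phi)=-1$, yet a point whose orbit runs through loops of lengths $3,9,27,\dots$ has $\sup_nS_n\phi(x)=+\infty$ and $\limsup_n\frac1nS_n\phi(x)>0$. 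So your $\rho$ need not be finite, let alone bounded on $1$-cylinders (boundedness on $[a]$ cannot follow from ``$\phi(a)$-type bounds plus $B_\phi$'', since for $n$ large $S_n\phi$ depends on the whole itinerary, not the first symbol).

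The regularity step has the same structural problem. The reduction $|\sup_na_n-\sup_nb_n|\le\sup_n|a_n-b_n|$ is fine, but for $x,y$ agreeing only on the first $k$ coordinates the points $\sigma^jx$ and $\sigma^jy$ with $j\ge k$ are unrelated, so $|S_n\phi(x)-S_n\phi(y)|$ for $n>k$ is not controlled by any variation of $\phi$; your proposed bound $var_k(\log\rho)\lesssim\sum_{j\ge k}var_j(\phi)$ would require the suprema to be attained at indices $n\le k$ uniformly, and the ``negativity tames the tail'' claim rests on the false uniform bound above. By contrast, with the paper's choice of $\rho$ the inequality $\mathcal{L}_\phi\rho\le\rho$ gives $e^{\phi(y)}\rho(y)\le\rho(\sigma y)$ for every $y$ directly (no finiteness of a running maximum is needed beyond recurrence/transience of $\phi$), and the regularity of $\log\rho$ comes from the standard distortion estimates for $\mathcal{L}_\phi^n$, which only ever compare points in the same $n$-cylinder. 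If you want a Ma\~n\'e-type coboundary argument to work here, you would at minimum need to restrict the supremum to times at which the orbit returns to a fixed compact part, which is essentially what the transfer-operator construction does for you.
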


This is essentially \cite[Lemma 1]{Sar01}, but we sketch parts of the proof here for completeness.

    \begin{proof}
        If $\phi$ is recurrent, then $\rho$ is the eigenfunction of the transfer operator $\L_\phi$ associated to $\phi$. If $\phi$ is transient,  take $\rho=\sum_{n\ge1}\L_\phi^n\mathbbm{1}_{[a]}$.  The regularity follows as in \cite[Lemma 1]{Sar01}, though there the shift is assumed topologically mixing: $\rho$ remains finite and non-positive in both cases under topological transitivity. 
    \end{proof}

The previous lemma provides a convenient upper bound on the potential for our proofs.  We next show that $\phi'$ inherits 
 (\hyperref[CI]{CI}) from $\phi$.
 
\begin{lemma}\label{lem:phi'}
    If $\delta_{\phi,\infty}<0$, then $\delta_{\phi',\infty}<0$.
\end{lemma}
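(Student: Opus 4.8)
\textbf{Proof plan for Lemma~\ref{lem:phi'}.}

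The plan is to transfer the contraction estimate from $\phi$ to the cohomologous potential $\phi' = \phi + \log\rho - \log\rho\circ\sigma$ by exploiting the telescoping of the coboundary along Birkhoff sums. The crucial observation is that, for any $x\in\Sigma$ and any $n\in\N$, the coboundary contribution collapses:
\begin{equation}
S_n\phi'(x) = S_n\phi(x) + \log\rho(x) - \log\rho(\sigma^n x).\label{eq:telescope}
\end{equation}
Thus $\frac1n S_n\phi'(x)$ and $\frac1n S_n\phi(x)$ differ only by the boundary term $\frac1n\bigl(\log\rho(x) - \log\rho(\sigma^n x)\bigr)$, and everything reduces to controlling this term uniformly over the cylinders in $B(n,M,q)$.

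First I would fix $q$ and $M$ and examine a point $x$ lying in some cylinder $[x_0,\dots,x_n]\in B(n,M,q)$. By definition of $B(n,M,q)$ we have $x_0,x_n\le q$, which is exactly the feature that makes the boundary term tractable: both $x$ and $\sigma^n x$ start in the compact part $[\le q]$. Since $\rho$ is bounded on each $1$-cylinder (by the statement of the preceding lemma), there are finite constants $\rho_{\min}(q)>0$ and $\rho_{\max}(q)<\infty$ with $\rho_{\min}(q)\le \rho(y)\le \rho_{\max}(q)$ for every $y\in[\le q]$. Hence $\log\rho(x) - \log\rho(\sigma^n x)$ is bounded in absolute value by the fixed constant $D_q := \log\bigl(\rho_{\max}(q)/\rho_{\min}(q)\bigr)$, \emph{independently of $n$}. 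Here I should check that $\rho$ is strictly positive on $[\le q]$ so that the logarithm makes sense; this follows from $\rho$ being the transfer-operator eigenfunction (recurrent case) or the convergent series $\sum_{n\ge1}\Lp_\phi^n\mathbbm{1}_{[a]}$ (transient case), both of which are positive under topological transitivity.

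Combining this with \eqref{eq:telescope}, for $x\in B$ with $B\in B(n,M,q)$ I get $\frac1n S_n\phi'(x) \le \frac1n S_n\phi(x) + \frac{D_q}{n}$, and taking the supremum over such $x$ yields $z_{\phi',n}(M,q) \le z_{\phi,n}(M,q) + \frac{D_q}{n}$. Passing to $\limsup_{n\to\infty}$ kills the $\frac{D_q}{n}$ term, so $\delta_{\phi',\infty}(M,q)\le\delta_{\phi,\infty}(M,q)$; taking $\liminf_{M\to\infty}$ and then $\liminf_{q\to\infty}$ (both of which are genuine limits by the monotonicity remark in Definition~\ref{def: no escape of mass}) gives $\delta_{\phi',\infty}\le\delta_{\phi,\infty}<0$, as required. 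The main obstacle to guard against is the dependence of the bound $D_q$ on $q$: because $\rho$ need not be bounded or bounded below uniformly over all of $\Sigma$, one cannot control the coboundary globally. The plan circumvents this precisely because the quantities $\delta_{\phi,\infty}(M,q)$ and $\delta_{\phi',\infty}(M,q)$ are compared \emph{at a fixed $q$}, where $D_q$ is a genuine finite constant, and the $\frac1n$ factor erases it in the limit before $q\to\infty$ is ever taken.
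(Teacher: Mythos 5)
Your proof is correct and takes a genuinely different route from the paper's. The paper closes the word $x_0,\dots,x_{n-1}$ up into a periodic orbit $y$ of period $n'=n+|w|$ via a connecting word, uses that the coboundary telescopes to zero over a full period (so $S_{n'}\phi'(y)=S_{n'}\phi(y)$), and then transfers the estimate back to $x$ using summable variations and the constants $\underline{C}_q(\phi')$, $B_{\phi'}$, at the cost of some bookkeeping comparing $B(n,2M,q_\eps)$ with $B(n',M,q_\eps)$. You instead telescope the coboundary directly and observe that the two boundary terms $\log\rho(x)$ and $\log\rho(\sigma^nx)$ are evaluated in the compact part $[\le q]$ (since $x_0,x_n\le q$ for cylinders in $B(n,M,q)$), where $\log\rho$ is uniformly bounded, so they contribute $O(1)$ and vanish after dividing by $n$; this is more elementary, needs no connecting words, and in fact yields the cleaner conclusion $\delta_{\phi',\infty}(M,q)=\delta_{\phi,\infty}(M,q)$ for every fixed $M,q$. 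One small point to tighten: the preceding lemma as stated only asserts that $\rho$ is bounded \emph{above} on each $1$-cylinder, so the existence of $\rho_{\min}(q)>0$ requires slightly more than pointwise positivity (an infimum over the non-compact set $[\le q]$ could a priori be $0$); it follows from the bounded-distortion property $var_1(\log\rho)\le B_\phi$ built into Sarig's construction of $\rho$, which upgrades positivity at a single point of each of the finitely many cylinders $[a]$, $a\le q$, to a uniform positive lower bound on $[\le q]$. With that supplied, your argument is complete.
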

    
(Note that in general this means that if $\delta_{\phi,\infty}<P(\phi)$, then $\delta_{\phi',\infty}<P(\phi')= P(\phi)$.)
    
    \begin{proof}
        By definition, there exist $\eps>0$ and $N_\eps,M_\eps, q_\eps$ such that \begin{equation}\label{eq: CI for phi}
        z_{\phi,n}(M,q_\eps)<-2\eps
        \end{equation} 
        for all $n>N_\eps$ and $M>M_\eps$.  Then for every $n>N_\eps$ large enough that $ \frac{n+\ell(q_\eps)}{2M}<\frac{n}{M}$, for every $x\in B$ for some $B\in B(n,2M,q_\eps)$, as in \eqref{eq:perlink}, there exists an admissible word ${w}={w}(x_{n-1},x_0)$, and a periodic point $y$ of period $n'=n+|{w}|$, such that $[y_0,\dots,y_{n+|w|-1}]\in B(n',M,q_\eps)$, $y=\left(\overline{x_0,\dots,x_{n-1}{w}(x_{n-1},x_0)}\right)$  and by summable variations, 
        $$S_n\phi'(x)\le S_{n'}\phi'(y)-\underline{C}_q(\phi')+B_{\phi'}=S_{n'}\phi(y)-\underline{C}_q(\phi')+B_{\phi'},$$
        where $\underline{C}_q(\phi')$ is defined as in \Cref{lem:finitelink}. Then \eqref{eq: CI for phi} implies
        $$S_n\phi'(x)<-2n'\eps-C_q(\phi')+B_{\phi'}$$ and by choosing $n$ large, this implies that for all $M>M_\eps$, $z_{\phi',n'}(M,q_\eps)<-\eps$, and consequently $\delta_{\phi',\infty}(q_\eps)<-\eps$. Since this inequality holds for all $q>q_\eps$, we conclude that $\delta_{\phi',\infty}<0$. 
        \end{proof}

Note that this lemma also holds for any $\psi=\phi+ \xi-\xi\circ\sigma$, provided $\xi$ has summable variations.  We will later use the following lemma to show that we cannot have a sequence of periodic measures whose integrals of $\phi$ converge to zero which moreover converge to a probability measure.

\begin{lemma}\label{lem:noeq}
There is no $q\in \N$ and sequence $(x^k)_k$ of periodic points of period $p_k$ such that $\frac1{p_k}S_{p_k}\phi(x^k)\to 0$  and $\nu_k([\le q]) \to 1$ as $k\to \infty$, where $\nu_k =\frac1{p_k}\sum_{i=0}^{p_k-1}\delta_{\sigma^ix^k}$.
\end{lemma}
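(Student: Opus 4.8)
The plan is to argue by contradiction: from the hypothetical sequence I will manufacture a $\sigma$-invariant \emph{probability} measure supported on a \emph{compact} subsystem whose $\phi$-integral equals $P(\phi)=0$, and then rule this out by a strict pressure gap for proper compact subsystems. Throughout I would first invoke the coboundary reduction preceding \Cref{lem:phi'} to assume $\phi\le 0$: passing from $\phi$ to $\phi'=\phi+\log\rho-\log\rho\circ\sigma$ changes neither the Birkhoff averages $\tfrac1{p_k}S_{p_k}\phi(x^k)$ (the coboundary telescopes on periodic orbits) nor the measures $\nu_k$, and it preserves $P(\phi)=0$ and each $Z_n^*(\phi,a)$.

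The first ingredient is the gap: \emph{every irreducible finite subshift $Y\subsetneq\Sigma$ satisfies $\chi_{per}(\phi|_Y)\le P(\phi|_Y)<0$}. Since $P(\phi)=0$, the renewal-equation formalism of Sarig gives that the total first-return weight to any state $a$ obeys $\sum_n Z_n^*(\phi,a)=\sum_{\ell}e^{S_{|\ell|}\phi(\overline\ell)}\le 1$, the sum running over all first-return loops $\ell$ to $a$ with $\overline\ell$ the associated periodic point. If an irreducible finite $Y\ni a$ had $P(\phi|_Y)=0$, then $Y$, being positive recurrent, would already have $\sum_{\ell\subseteq Y}e^{S_{|\ell|}\phi(\overline\ell)}=1$; but topological transitivity of the infinite shift forces at least one further first-return loop to $a$ leaving $Y$ (finite, of positive weight, by \Cref{rem:Fprop}), so the total would strictly exceed $1$, a contradiction. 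Hence $P(\phi|_Y)<0$, and $\chi_{per}(\phi|_Y)=\sup_{\rho\in\M(Y)}\int\phi\,d\rho\le P(\phi|_Y)<0$.

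Now suppose $q$ and $(x^k)_k$ as in the statement exist. As $\nu_k([\le q])\to 1$, some $a\le q$ is visited with frequency $\ge\tfrac1{2q}$ along a subsequence; relabel so $x^k_0=a$. Inducing on $[a]$, each $x^k$ is a cyclic concatenation of $r_k$ first-return loops with $\tfrac{p_k}{2q}\le r_k\le p_k$, and the induced Birkhoff average on the full shift over the countable loop alphabet is $\int\overline\phi\,d\overline\nu_k=\tfrac1{r_k}S_{p_k}\phi(x^k)\to 0$, since $\tfrac{p_k}{r_k}\le 2q$. Here $\overline\nu_k$ is the empirical loop measure and $\overline\phi\le 0$. \emph{The main obstacle is that the weak-$*$ limit of $\nu_k$ on the non-compact space $\Sigma$ may lose mass to infinity}, so one cannot directly claim a compactly supported probability limit. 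I resolve this in the induced picture: because $\overline\phi\le 0$ and $\int\overline\phi\,d\overline\nu_k\to 0$, a Markov inequality forces the proportion of loops $\ell$ in the cycle with $-S_{|\ell|}\phi\le\eta$ to tend to $1$ for each $\eta>0$; by summable variations (bounded distortion) these are loops with $e^{S_{|\ell|}\phi(\overline\ell)}\ge\beta$ for a fixed $\beta>0$, and since $\sum_\ell e^{S_{|\ell|}\phi(\overline\ell)}\le 1$ there are at most $\beta^{-1}$ of them: a \emph{finite} set $\mathcal H$. Thus $\overline\nu_k$ concentrates on the finite sub-alphabet $\mathcal H$, no mass escapes, and a subsequence converges to a genuine invariant probability measure $\overline\mu$ carried by $\mathcal H^{\N_0}$ with $\int\overline\phi\,d\overline\mu=0$.

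Finally I would push $\overline\mu$ back to $\Sigma$ by the Kac--Abramov construction: the loops in $\mathcal H$ have bounded length, so the return time is $\overline\mu$-integrable and yields $\mu\in\M(\Sigma)$ with $\int\phi\,d\mu=\int\overline\phi\,d\overline\mu/\int\overline\varphi_a\,d\overline\mu=0$. As $\mathcal H$ is a finite set of finite loops, all their symbols lie in some $[\le q']$, so $\mu$ is supported on the compact subsystem $Y_{q'}$; applying the first step to its (finitely many) irreducible components gives $\chi_{per}(\phi|_{Y_{q'}})<0$. But $\chi_{per}(\phi|_{Y_{q'}})\ge\int\phi\,d\mu=0$, a contradiction. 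Thus no such $q$ and $(x^k)_k$ exist. I expect the delicate point to be the escape-of-mass issue, which is precisely what the passage to the induced first-return system and the summability $\sum_n Z_n^*(\phi,a)\le 1$ are designed to defeat.
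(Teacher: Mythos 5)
Your route is genuinely different from the paper's, and its overall architecture is sound, but one step is justified by an identity that is false in general. You claim $\sum_n Z_n^*(\phi,a)=\sum_\ell e^{S_{|\ell|}\phi(\overline\ell)}\le 1$, and that a finite irreducible $Y$ with $P(\phi|_Y)=0$ would have $\sum_{\ell\subseteq Y}e^{S_{|\ell|}\phi(\overline\ell)}=1$ exactly, so that one extra loop pushes the total over $1$. Neither the upper bound $1$ nor the exact value $1$ is available here: evaluating first-return weights at the periodic points $\overline\ell$ only controls these sums up to the multiplicative distortion constant $e^{\pm B_\phi}$ (exactness would require the stronger normalisation $\L_\phi 1=1$, which the coboundary reduction to $\phi'\le 0$ does not provide). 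What is true, via $P(\overline\phi_a)\le 0$ and the inequality $\bigl|P(\overline{\phi+p})-\log\sum_k e^{kp}Z_k^*(\phi,a)\bigr|\le B_\phi$ quoted in Section 4, is $\sum_n Z_n^*(\phi,a)\le e^{B_\phi}<\infty$. That finiteness is all your Markov-inequality/concentration step needs (the set of loops of weight $\ge\beta$ has at most $e^{B_\phi}\beta^{-1}$ elements), so the heart of your argument survives. But the ``total exceeds $1$'' contradiction collapses, and with it your proof of the pressure gap $P(\phi|_Y)<0$ for proper finite irreducible subsystems. That gap statement is true, but needs a different argument: for instance, if $P(\phi|_Y)=0$ then appending to the first-return loops of $Y$ at $a$ one further loop $\ell_0\not\subseteq Y$ with frequency $\eps$ gains entropy of order $-\eps\log\eps$ against an $O(\eps)$ loss in $\int\phi$, forcing $P(\overline\phi_a)>0$ and hence $P(\phi)>0$, a contradiction. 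Alternatively, once you have your limit measure $\mu$ with $\int\phi\,d\mu=0$, you can skip the subsystem gap entirely and conclude as the paper does: $h(\mu)+\int\phi\,d\mu\le P(\phi)=0$ forces $h(\mu)=0$, so $\mu$ is a zero-entropy equilibrium state, impossible by \cite[Theorem 5.6]{Sar15}.

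For comparison: the paper works directly with the measures $\nu_k$ on $\Sigma$, invokes the compactness of the space of invariant sub-probability measures in the cylinder topology \cite[Theorem 1.2]{IomVel21}, uses the hypothesis $\nu_k([\le q])\to 1$ to rule out escape of mass and upgrade to weak-$*$ convergence, then runs a truncation argument ($\phi'_L=\max\{\phi',-L\}$) to get $\int\phi\,d\nu=0$, and concludes via the zero-entropy equilibrium state. You instead induce on a frequently visited state and defeat escape of mass by showing the induced empirical measures concentrate on finitely many first-return loops, then project back by Kac/Abramov. Your approach avoids the machinery of \cite{IomVel21} and in fact uses less than the full hypothesis (only $\liminf_k\nu_k([\le q])>0$ is needed to find a state visited with positive frequency), which is a genuine advantage; its cost is exactly the bookkeeping above, which must be done up to distortion constants, plus a (routine, but omitted) truncation or upper-semicontinuity step to justify $\int\overline\phi\,d\overline\mu=0$ for the unbounded-below induced potential.
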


In the proof we make use of the notion of a sequence of measures $(\mu_n)_n$ \emph{converging on cylinders} to a measure $\mu$: this means that for any $C\in \C_k$, $\mu_n(C) \to \mu(C)$ as $n\to \infty$, see \cite{IomVel21} for more details.

\begin{proof}
We will show that if we assume that the lemma is false then there is an equilibrium state $\nu$ having zero measure-theoretic entropy, which by for example \cite[Theorem 5.6]{Sar15} is a contradiction.

Let $\phi'$ be as in \Cref{lem:phi'}, and it is easy to see $S_n\phi(x)=S_n\phi'(x)$ for all $n\in\N$ and all $\sigma^nx=x$. Suppose there is such a $q$ and sequence of periodic points as in the lemma.  Note that $\int\phi'~d\nu_k\to 0$.  By \cite[Theorem 1.2]{IomVel21}, $\mathcal{M}_{\le1}(\Sigma)$ the space of shift-invariant sub-probability measures on $\Sigma$ is compact with respect to the convergence on cylinders topology, i.e., there is $\nu\in \mathcal{M}_{\le1}(\Sigma)$ such that $\nu_k\to\nu$ (up to subsequences) on cylinders.  Our assumption implies that $\nu$ is a probability measure.   Hence, as $(\nu_k)_k$, $\nu$ are probability measures, \cite[Lemma 3.17]{IomVel21} implies that the convergence also holds in the weak-* topology. In particular if $\phi'_L(x) = \max\{\phi'(x), -L\}$ then $\phi'_L$ is continuous and bounded and $\int\phi_L'~d\nu_{n_k}\to \int\phi_L'~d\nu$. 

\begin{claim*} Given $L>0$, for any $\eps>0$ there exists $K_\eps'$ such that $k\ge K_\eps'$ implies
$$\left|\int\phi_L'~d\nu_{n_k}- \int\phi'~d\nu_{n_k}\right|< \eps/4.$$
\end{claim*}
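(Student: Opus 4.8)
The plan is to prove the Claim, which asserts that the truncation error between $\int \phi_L'\,d\nu_{n_k}$ and $\int \phi'\,d\nu_{n_k}$ is uniformly small for large $k$. Since $\phi_L'(x) = \max\{\phi'(x), -L\} \ge \phi'(x)$ and both quantities are integrated against the same probability measure $\nu_{n_k}$, the difference is controlled by
\begin{equation*}
\left|\int\phi_L'~d\nu_{n_k}- \int\phi'~d\nu_{n_k}\right| = \int (\phi_L' - \phi')~d\nu_{n_k} = \int_{\{\phi'<-L\}} (\phi_L'-\phi')~d\nu_{n_k},
\end{equation*}
so the whole problem reduces to showing that the mass that $\nu_{n_k}$ places on the region where $\phi'$ is very negative contributes negligibly, uniformly in $k$. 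The key constraint we have at our disposal is that $\int\phi'~d\nu_{n_k}\to 0$, i.e.\ these integrals are \emph{bounded} (in fact they converge to $0$, which is the largest possible value of $\int\phi'\,d\mu$ since $\phi'\le 0$ and $\chi_{per}(\phi')=\sup_\mu\int\phi'\,d\mu$).

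First I would exploit that $\phi'\le 0$ to write, on the bad region, $\phi_L' - \phi' = -L - \phi' \le -\phi' = |\phi'|$, and more usefully observe that $|\phi_L' - \phi'|\le |\phi'|\cdot\mathbbm{1}_{\{\phi'<-L\}}$. The plan is then to split the tail integral and use the fact that $\int \phi'\,d\nu_{n_k}$ is uniformly close to $0$: since $\phi'\le \phi_L'\le 0$, we have $0\ge \int\phi'\,d\nu_{n_k}\ge \int\phi'\,d\nu_{n_k}$ trivially, but more to the point the difference $\int(\phi_L'-\phi')\,d\nu_{n_k}$ is nonnegative and bounded above by $-\int\phi'\,d\nu_{n_k}$ only over the tail. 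The cleaner route is to note that for the periodic measures $\nu_{n_k}$ one has explicit control via the periodic orbit, so I would instead bound the tail contribution directly using that
\begin{equation*}
0\le \int(\phi_L'-\phi')~d\nu_{n_k} \le \int |\phi'|\,\mathbbm{1}_{\{|\phi'|>L\}}~d\nu_{n_k},
\end{equation*}
and then argue this tail is small for large $L$, uniformly in $k$, by combining the uniform bound on $\int |\phi'|\,d\nu_{n_k} = -\int\phi'\,d\nu_{n_k}\to 0$ with a uniform integrability argument coming from summable variations of $\phi'$ and the structure of $B(n,M,q)$.

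The main obstacle, and the step requiring genuine care, is establishing \emph{uniform integrability} of the family $(\phi')$ against $(\nu_{n_k})_k$: pointwise the difference $(\phi_L'-\phi')(x)$ vanishes as $L\to\infty$ for each fixed $x$, but I must rule out the mass escaping into deeper and deeper symbols where $\phi'$ is increasingly negative as $k$ grows. Here the hypotheses of the lemma enter decisively: because $\int\phi'\,d\nu_{n_k}\to 0$ and $\phi'\le 0$, the negative part $-\phi'$ has integral tending to $0$, so for any $\eps>0$ Markov's inequality gives $\nu_{n_k}(\{-\phi'>L\})\le \frac{1}{L}\int(-\phi')\,d\nu_{n_k}$, which is small uniformly in $k$ once $L$ is large; combined with the assumed mass concentration $\nu_k([\le q])\to 1$ this pins down the tail. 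I would therefore fix $L$ first (chosen large relative to $\eps$ and the uniform bound $\sup_k\int(-\phi')\,d\nu_{n_k}<\infty$), then choose $K_\eps'$ so that for $k\ge K_\eps'$ the integral $\int(-\phi')\,d\nu_{n_k}$ is close enough to $0$ that the displayed tail integral falls below $\eps/4$, completing the Claim.
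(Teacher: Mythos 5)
Your proof is correct and follows essentially the same route as the paper: since $\phi'\le 0$, the truncation error $\int(\phi_L'-\phi')\,d\nu_{n_k}$ is bounded above by the full integral $-\int\phi'\,d\nu_{n_k}$, which tends to $0$ by hypothesis, so the "uniform integrability" you flag as the main obstacle is in fact automatic and your final chain of inequalities is exactly the paper's argument (stated there as a proof by contradiction). The detour through Markov's inequality, the mass concentration $\nu_k([\le q])\to 1$, and the structure of $B(n,M,q)$ is unnecessary, and note that $L$ is given in the Claim rather than chosen after $\eps$, but neither point affects the validity of your bound.
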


            \begin{proof}[Proof of Claim.]
            Since 
            $$ \int\phi'~d\nu_{n_k}= \int\phi_L'~d\nu_{n_k}+ \int_{\{\phi'<-L\}}\phi'~d\nu_{n_k},$$
            if the claim is false then there is $\eps>0$ such that for any $N\in \N$ we can find $k\ge N$ such that $\left|\int_{\{\phi'<-L\}}\phi'~d\nu_{n_k}\right|\ge \eps/4$, i.e., $\int_{\{\phi'<-L\}}\phi'~d\nu_{n_k}\le -\eps/4$.  But since $\phi'\le 0$, this means $ \int\phi'~d\nu_{n_k}\le -\eps/4$, contradicting the fact that $\int\phi'~d\nu_{n_k} \to 0$.
            \end{proof}
            
            Now given $L>0$, take $K_\eps\ge K_\eps'$, for $K_\eps'$ as in the claim and such that 
            $\left|\int\phi'~d\nu_{n_k}\right|< \eps/4$ and $\left|\int\phi_L'~d\nu- \int\phi_L'~d\nu_{n_k}\right|< \eps/2$ whenever $k\ge K_\eps$.  Then
            \begin{align}
            \begin{split}
            \left|\int\phi_L'~d\nu\right| &\le \left|\int\phi_L'~d\nu- \int\phi_L'~d\nu_{n_k}\right|+  \left|\int\phi_L'~d\nu_{n_k}\right|\\
            &<\frac{\eps}{2}+\left|\int\phi'_L~d\nu_{n_k}-\int\phi'~d\nu_{n_k}\right|+\left|\int\phi'~d\nu_{n_k}\right|\\
            &<\frac{\eps}{2}+\frac{\eps}{4}+\frac{\eps}{4}=\eps.\label{eq:phiLbd}
            \end{split}
            \end{align}
Now the Monotone Convergence Theorem implies $-\int\phi_L'~d\nu \nearrow -\int\phi'~d\nu= -\int\phi~d\nu$ as $L\to \infty$.  Moreover \eqref{eq:phiLbd} and weak* convergence of $\nu_{n_k}$ to $\nu$ imply $\left|\int\phi~d\nu\right|< \eps$ for all $\eps$, i.e., $\int\phi~d\nu=0$.  Since $h(\nu)+ \int\phi~d\nu\le P(\phi)=0$, this is a contradiction.
\end{proof}

\begin{proposition}\label{prop:CI-UCS}
    Under the assumptions of Theorem~\ref{thm:contract}, (\hyperref[CI]{CI}) implies (\hyperref[UCS]{UCS}).
\end{proposition}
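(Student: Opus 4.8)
The plan is to argue by contradiction and reduce to \Cref{lem:noeq}. Since $\chi_{per}(\phi)=\sup\{\int\phi\,d\mu:\mu\in\M(\Sigma)\}\le P(\phi)=0$, failure of (\hyperref[UCS]{UCS}) means $\chi_{per}(\phi)=0$, so there is a sequence of periodic points $x^k$ of period $p_k$ with $a_k:=\tfrac{1}{p_k}S_{p_k}\phi(x^k)\to0$. First I would replace $\phi$ by the non-positive potential $\phi'$ of \Cref{lem:phi'}: it agrees with $\phi$ on Birkhoff sums over periodic orbits (the coboundary telescopes) and inherits (\hyperref[CI]{CI}), i.e.\ $\delta_{\phi',\infty}<0$. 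Fixing $\eps>0$ and $N_\eps,M_\eps,q_\eps$ with $z_{\phi',n}(M,q_\eps)<-2\eps$ for all $n>N_\eps$ and $M>M_\eps$, I set $q:=q_\eps$ and $M:=M_\eps+1$.

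Next I would cut each orbit at its successive visits to $[\le q]$, calling an excursion of length $n$ \emph{deep} when $n\ge\max\{2M,N_\eps+1\}$ and \emph{short} otherwise, and write $L_k$ for the total length of the deep excursions. A deep excursion is an $(n+1)$-cylinder whose only coordinates $\le q$ are its two endpoints, so it lies in $B(n,M,q)$ and (\hyperref[CI]{CI}) yields $S_n\phi'\le-2\eps\,n$ on it. Because $\phi'\le0$, the compact visits and the short excursions contribute non-positively, so $S_{p_k}\phi'(x^k)\le-2\eps L_k$ and hence $L_k/p_k\le -a_k/(2\eps)\to0$. (An $x^k$ missing $[\le q]$ entirely is closed up through a state $\le q$ via \eqref{eq:perlink}; bounding the linking contribution below by \Cref{lem:finitelink} and using summable variations gives $a_k\le-2\eps+O(1/p_k)$, contradicting $a_k\to0$, so for large $k$ every $x^k$ meets $[\le q]$.)

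The decisive step is then the $\F$-property. By \Cref{rem:Fprop} there are only finitely many paths from $[\le q]$ to $[\le q]$ of length at most $2M$, so all short excursions visit a single finite set of states, bounded by some $q'\ge q$. Thus the orbit can leave $[\le q']$ only during deep excursions, giving $\nu_k([>q'])\le L_k/p_k\to0$, that is $\nu_k([\le q'])\to1$ for $\nu_k=\tfrac{1}{p_k}\sum_{i=0}^{p_k-1}\delta_{\sigma^ix^k}$. Since also $\tfrac{1}{p_k}S_{p_k}\phi(x^k)=a_k\to0$, the sequence $(x^k)$ is exactly the configuration ruled out by \Cref{lem:noeq}, the desired contradiction. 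Hence (\hyperref[CI]{CI}) implies (\hyperref[UCS]{UCS}), completing the proof of \Cref{thm:contract}.

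The hard part is the treatment of the short excursions: (\hyperref[CI]{CI}) is tailored to excursions that go deep into infinity and says nothing about an orbit that oscillates rapidly in and out of $[\le q]$ while climbing to ever higher states, so a priori mass could escape to infinity through such shallow returns and the reduction to \Cref{lem:noeq} would break down. The $\F$-property is precisely what forbids this, by confining all short excursions to finitely many states; this is where the standing hypothesis of \Cref{thm:contract} is essential.
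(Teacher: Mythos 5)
Your proof is correct and follows essentially the same route as the paper's: contradiction via a sequence of periodic orbits with vanishing Birkhoff averages, passage to the non-positive potential $\phi'$, a decomposition into excursions from $[\le q]$ where (\hyperref[CI]{CI}) forces the proportion of time in deep excursions to vanish, the $\F$-property confining the short excursions to a finite part $[\le q']$, and a final appeal to \Cref{lem:noeq}. Your explicit treatment of periodic orbits that avoid $[\le q]$ entirely is a small point the paper glosses over, but it does not change the argument.
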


The idea of the proof is that  (\hyperref[UCS]{UCS}) must hold for orbits which `spend most of their time in a compact part' of the space, something which holds in the finite alphabet case but here is assured by Lemma~\ref{lem:noeq}, and then (\hyperref[CI]{CI}) ensures that orbits which `spend significant time outside the compact part' satisfy contraction and hence (\hyperref[UCS]{UCS}).

\begin{proof}
Suppose (\hyperref[UCS]{UCS}) fails. Then there exists a sequence of periodic points $x^1,x^2,\dots$ with periods $p_1,p_2,\dots$ and with Birkhoff averages $$s_n=\frac{1}{p_k}S_{p_k}\phi(x^n)\le0$$ for all $n$ and $\lim_{n\to \infty}s_n=0$.

By the definition of (\hyperref[CI]{CI}) and \Cref{lem:phi'}, for all $\eps>0$ such that $\delta_{\phi',\infty}<-\eps<0$, there exist $N_\eps$, $M_\eps,q_\eps$ such that for $n>N_\eps$, $M>M_\eps$, $q>q_\eps$ and all $x$ such that $[x_0,\dots,x_{n-1}]\in B(n,M,q)$, $S_n\phi'(x)<-n\eps$.

Given $q, N\in \N$, let $\A_{[\le q], N}$ be the set of words $w\in \Sigma_N$ such that $[w]_0\le q$ and $[w]_i> q$ for $i=1, \ldots, N$ and $wq'\in \Sigma_{N+1}$ for some $q'\le q$, i.e., the \emph{first return words to $[\le q]$ of length $N$}.  Let $\A_{[\le q]}:= \cup_{N\ge 1}\A_{[\le q], N}$ and \[[w,\le q]:=\bigcup_{q'\le q}\{[wq']:wq'\in\Sigma_{|w|+1}\}\]

Given $x\in \Sigma$ such that $x_0, x_n\le q$ we can decompose $x_0, \ldots, x_{n-1} = w_1 v_1w_2 v_2 \ldots w_kv_k$, where $w_i\in \A_{[\le q]}$ and ${v}_i\in \{\emptyset\}\cup\left(\cup_m\{1, \ldots, q\}^m\right)$ for all $1\le i\le k$.  For each $x$, let $\mathcal D(x,q)$ denote the set of words $w_i$ in this decomposition. 

Given $q>q_\eps$, define the proportion function $\zeta(\cdot)$ 
\begin{equation}\label{eq:proportion_fn}
    \zeta(x^n):=\frac{1}{p_n}\sideset{}{'}\sum_{\left\{{w}\in \mathcal D(x^n
 ,q_\eps)\cap (\bigcup_{N\ge N_\eps}\mathcal A_{\le q_\eps,N})\right\}}|{w}|.
\end{equation}
Here $\sum'$ means that we count with multiplicity, i.e., if $w$ appears $k$ times in the decomposition of $x^n$ we sum $|w|$ $k$ times.

Notice that since $w_j\le q$ if and only if $j=0$ for each ${w}\in \A_{[\le q], N}$, so long as $(N+1)M_\eps\ge1$, $S_N\phi'(x)<-N\eps$ for $x\in  [{w}]$.  Hence, since we can assume that $(N_\eps+1) M_\eps\ge 1$,  we can show that $\limsup_{n\to \infty}\zeta(x^n)=0$: if there exists $\eta>0$ such that $\lim_{n\to \infty}\zeta(x^n)\ge \eta$, by the non-positivity of $\phi'$, 
    \begin{align}
    \begin{split}
    \liminf_{n\rightarrow\infty}s_n&\le\liminf_{n\rightarrow\infty}\frac{1}{p_n}\sideset{}{'}\sum_{\left\{{w}\in \mathcal D(x^n
 ,q_\eps)\cap (\bigcup_{N\ge N_\eps}\mathcal A_{\le q_\eps,N})\right\}}\sup_{x\in[{w},\le q_\eps]}S_{|{w}|}\phi'(x)\\
    &\le\liminf_{n\rightarrow\infty}\frac{1}{p_n}\sideset{}{'}\sum_{\left\{{w}\in \mathcal D(x^n
 ,q_\eps)\cap (\bigcup_{N\ge N_\eps}\mathcal A_{\le q_\eps,N})\right\}}-|w|\eps\\
        &\le\limsup_{n\rightarrow\infty} -\zeta\left(x^n\right)\eps\le-\eta \eps<0, \label{eq:proportion}
        \end{split}
        \end{align}
contradicting our assumption that $\lim_{n\to\infty}s_n=0$.        
        
        Next since by the $\F$-property and topological transitivity, $\#\{\cup_{N\le N_\eps} \A_{[\le q], N}\} <\infty$ and the following quantity is also finite:
    \begin{equation}
                q'(q,N):=\min\left\{q'\in\N: \text{if } {w}\in\cup_{N\le N_\eps} \A_{[\le q], N} \text{ then } w\in [\le q']^{|w|}\right\}. 
            \end{equation}
Therefore, since $\limsup_{n\to \infty}\zeta(x^n)=0$,  $$\lim_{n\to \infty}\nu_n\left([\le q']\right)=1 \text{ where }\nu_k=\frac{1}{p_k}\sum_{i=j}^{p_k-1}\delta_{\sigma^ix^k},$$ which this contradicts \Cref{lem:noeq}, hence (\hyperref[CI]{CI}) implies (\hyperref[UCS]{UCS}).
\end{proof}

\section{Proof of Theorem~\ref{thm:UCS-SPR}}
\label{sec:UCS-SPR}

Assuming now (\hyperref[UCS]{UCS}) holds, then (\hyperref[CI]{CI}) follows from \Cref{thm:contract} and in the next theorem we then show the system is (\hyperref[def:SPR]{SPR}) under slightly more general assumptions than those in \Cref{thm:UCS-SPR}.

\begin{theorem}
\label{thm:CI-SPR}
    Let $(\Sigma,\phi) $ be a topologically transitive CMS with $h_{top}<\infty$.  Suppose that $\phi$ a potential of summable variations with $P(\phi)<\infty$, satisfying (\hyperref[CI]{CI}) and assume that $\delta_{\phi,\infty}+h_{\infty}<P(\phi)$. Then $\phi$ is (\hyperref[def:SPR]{SPR}).
\end{theorem}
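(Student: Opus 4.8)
The plan is to verify the equivalent form \eqref{eq:SPRequiv} of \hyperref[def:SPR]{SPR}, namely that $\limsup_{n\to\infty}\frac1n\log Z_n^*(\phi,a)<P(\phi)$ for some state $a$. As throughout, I normalise $P(\phi)=0$, and by \Cref{lem:phi'} may replace $\phi$ by $\phi'\le 0$, noting that $S_n\phi=S_n\phi'$ on periodic points, so it suffices to bound $Z_n^*$ for a non-positive potential. Since \hyperref[CI]{(CI)} is assumed and $h_{top}<\infty$ forces the $\F$-property, \Cref{thm:contract} supplies \hyperref[CRC]{(CRC)}: for each $q$ there are $C_q$ and $\lambda_q>0$ with $S_n\phi(x)\le C_q-n\lambda_q$ whenever $x_0,x_n\le q$. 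Finally I fix $\gamma>0$ and, using that $h_\infty(M,q)$ and $\delta_{\phi,\infty}(M,q)$ decrease to $h_\infty$ and $\delta_{\phi,\infty}$ whose sum is negative, choose $q,M$ so large that $h_\infty(M,q)+\delta_{\phi,\infty}(M,q)<-2\gamma$.

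Each periodic point counted by $Z_n^*(\phi,a)$ is a first return loop at $a$, and I split the sum according to how often the orbit meets $[\le q]$, writing $Z_n^*(\phi,a)=Z_n^{\mathrm{far}}+Z_n^{\mathrm{near}}$. The term $Z_n^{\mathrm{far}}$ collects those $x$ whose cylinder $[x_0,\dots,x_n]$ lies in $B(n,M,q)$, that is, those making few returns to $[\le q]$. For these the number of cylinders is $z_n(M,q)$ and each weight is at most $e^{n z_{\phi,n}(M,q)}$, so
$$\limsup_{n\to\infty}\frac1n\log Z_n^{\mathrm{far}}\le h_\infty(M,q)+\delta_{\phi,\infty}(M,q)<-2\gamma<0.$$
This is exactly where the strengthened hypothesis $\delta_{\phi,\infty}+h_\infty<P(\phi)$, rather than merely \hyperref[CI]{(CI)}, is used, and it is the easy half.

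The remaining term $Z_n^{\mathrm{near}}$ collects loops that return to $[\le q]$ a definite proportion of the time. Here I would decompose each loop into first return words to $[\le q]$ and separate \emph{short} words (length $\le L$) from \emph{long} ones. By the $\F$-property together with $h_{top}<\infty$ there are only finitely many short words, whereas a long word of length $m$ lies in $B(m,M,q)$, so the total weight $\sum_{m>L}\sum_{w}\sup_{[w]}e^{S_m\phi}$ of the long words is dominated by $\sum_{m>L}e^{m(h_\infty(M,q)+\delta_{\phi,\infty}(M,q)+o(1))}$, which tends to $0$ as $L\to\infty$. Encoding the loops through the resulting finite-state weighted transfer operator on $[\le q]$, the growth rate of $Z_n^{\mathrm{near}}$ is governed by the rate of \emph{first return to the single state} $a$; the heuristic is that this rate is strictly sub-pressure, exactly as for an irreducible finite subshift, where deleting one state drops the Perron spectral radius below $e^{P(\phi)}=1$, while the long-word contribution perturbs this only by an amount that $L$ absorbs.

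The main obstacle is making this compact estimate rigorous, for two reasons: passing to supremum weights inflates the finite-state operator, so the clean spectral-radius drop must be recovered using that the induced potential on first return words still has summable variations, and one must restrict to the transitive component of $[\le q]$ containing $a$. The delicate critical case is when the compact subsystem itself carries pressure $0$; to exclude it I would argue by contradiction, that if $Z_n^{\mathrm{near}}$ failed to decay exponentially then a maximising family of first return loops, regarded as periodic measures $\nu_k$, would by compactness of $\mathcal M_{\le1}(\Sigma)$ on cylinders converge to an invariant measure realising $h+\int\phi=0$; since $h_\infty+\delta_{\phi,\infty}<0$ forbids loss of mass to infinity, the $\nu_k$ must concentrate on some $[\le q']$, and reducing to the zero-entropy situation then contradicts \Cref{lem:noeq}. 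Reconciling the entropy of the compact subsystem with the contraction supplied by \hyperref[CRC]{(CRC)} — i.e. turning the finite-subshift heuristic into a clean application of \Cref{lem:noeq} — is where the real work lies.
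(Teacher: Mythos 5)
Your decomposition of $Z_n^*(\phi,a)$ into loops that rarely visit $[\le q]$ and loops that visit it often is the same split the paper makes, and your estimate for the rare-visit part is correct and essentially the paper's (at most $z_n(M,q)$ cylinders, each of weight at most $e^{nz_{\phi,n}(M,q)}$, then use $h_\infty(M,q)+\delta_{\phi,\infty}(M,q)<0$). The gap is in the frequent-visit part, which you yourself flag as ``where the real work lies'': neither the transfer-operator heuristic nor the proposed fallback via \Cref{lem:noeq} closes it. The fallback fails for a concrete reason: if $Z_n^{\mathrm{near}}$ does not decay exponentially, you do \emph{not} obtain a sequence of periodic points $x^k$ with $\frac1{p_k}S_{p_k}\phi(x^k)\to 0$, because the sum may be carried by exponentially many loops each with strictly negative Birkhoff average (for instance $e^{nh}$ loops each of weight $e^{-nh}$). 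Extracting a limit measure with $h(\nu)+\int\phi\,d\nu=0$ would require a variational argument over the whole ensemble of loops, accounting for the entropy of their multiplicity; and even if that were done, the limit could be a genuine positive-entropy equilibrium state, which exists for many non-SPR potentials and so yields no contradiction. \Cref{lem:noeq} only rules out the zero-entropy case and is used in the paper for Proposition~\ref{prop:CI-UCS}, not here.

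What the paper does instead for this half is measure-theoretic rather than variational. It takes the first return map $F$ to $Y=[\le q]$, which satisfies a BIP-type property by topological transitivity and the $\F$-property and hence carries a conformal measure $m_Y$ for the normalised induced potential. Transitivity gives an $N$ such that every $F$-cylinder covers $Y$ within $N$ iterates, whence a uniform $\beta>0$ with
$$m_Y\left(x\in Z_n: F^j(x)\notin[a],\ j=n,\dots,n+N-1\right)<e^{-\beta}m_Y(Z_n)$$
(\Cref{claim:avoid}); iterating gives $m_Y\left(x\in[a]:F^j(x)\notin[a],\ j=1,\dots,kN\right)<e^{-k\beta}$ (\Cref{claim:multavoid}). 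A loop with $\varphi_a=n$ that returns to $Y$ at least $\gamma n$ times therefore has conformal measure exponentially small in $n$, and conformality (together with \Cref{lem:few_rets} and \Cref{lem:meas_few_rets} for the other term) converts this back into the bound on $Z_n^*(\phi,a)$ needed for \eqref{eq:SPRequiv}. To complete your proof you need this, or an equivalent quantitative ``forced return to $[a]$'' mechanism; \Cref{lem:noeq} cannot substitute for it.
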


We provide an example in \Cref{ssec:eg_not_spr} with $\delta_{\phi,\infty}+h_{\infty}=P(\phi)$ but the systems fails to be SPR, so the condition $\delta_{\phi,\infty}+h_{\infty}<P(\phi)$ above is sharp.

\begin{lemma}\label{lem:few_rets}
 For all $\eps>0$, there exists $q$ and $K_q\ge0$ such that for all positive $n$, if $x_0,x_n\le q$ and $x_k>q$ whenever $1\le k\le n-1$, then $$S_n\phi(x)<K_q+n\left(\delta_{\phi,\infty}+\eps\right).$$
\end{lemma}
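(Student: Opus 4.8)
The plan is to split the range of $n$ into a large-$n$ regime governed directly by the definition of $\delta_{\phi,\infty}$, and a finite small-$n$ regime which gets absorbed into the constant $K_q$. First I would unwind the nested limits in \Cref{def: no escape of mass}. Since $\delta_{\phi,\infty}(q)$ and $\delta_{\phi,\infty}(M,q)$ are monotone decreasing (so that, as noted there, the $\liminf$'s are genuine limits), given $\eps>0$ I can choose $q$ so large that $\delta_{\phi,\infty}(q)<\delta_{\phi,\infty}+\eps/3$, then $M$ so large that $\delta_{\phi,\infty}(M,q)<\delta_{\phi,\infty}(q)+\eps/3$, and finally, since $\delta_{\phi,\infty}(M,q)=\limsup_n z_{\phi,n}(M,q)$, an $N_0$ with $z_{\phi,n}(M,q)<\delta_{\phi,\infty}+\eps$ for all $n>N_0$. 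This $q$ is the one I will produce in the statement.

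For the large-$n$ regime, observe that a word with $x_0,x_n\le q$ and $x_k>q$ for $1\le k\le n-1$ has exactly two coordinates in $[\le q]$, namely $x_0$ and $x_n$, so $\#\{k:x_k\le q\}=2$. Hence as soon as $n\ge 2M-1$ we have $2\le (n+1)/M$, and therefore the cylinder $[x_0,\dots,x_n]$ belongs to $B(n,M,q)$. Setting $n_0:=\max\{N_0+1,\,2M-1\}$, for every $n\ge n_0$ the definition of $z_{\phi,n}(M,q)$ gives $\tfrac1n S_n\phi(x)\le z_{\phi,n}(M,q)<\delta_{\phi,\infty}+\eps$, i.e.\ $S_n\phi(x)<n(\delta_{\phi,\infty}+\eps)$, which is even stronger than the claimed bound.

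It then remains to handle the finitely many lengths $1\le n<n_0$. Here I would invoke the $\F$-property, which holds because $h_{top}<\infty$: by the Remark following \Cref{def: F-property}, for each such $n$ there are only finitely many admissible paths of length $n+1$ starting and ending in $[\le q]$, and in particular only finitely many first-return words of length $n$. On each corresponding cylinder $[x_0,\dots,x_n]$, summable variations give, for $x,y$ in the cylinder, $|S_n\phi(x)-S_n\phi(y)|\le\sum_{j=2}^{n+1}var_j(\phi)\le B_\phi$, and since $\phi$ is real-valued this forces $\sup_{[x_0,\dots,x_n]}S_n\phi<\infty$. Taking $K_q\ge 0$ to be the maximum over this finite collection of the quantities $\sup_{[x_0,\dots,x_n]}S_n\phi-n(\delta_{\phi,\infty}+\eps)$ then yields $S_n\phi(x)<K_q+n(\delta_{\phi,\infty}+\eps)$ for $n<n_0$ as well, and combining the two regimes proves the lemma.

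The main obstacle is exactly this small-$n$ regime: without the $\F$-property there could be infinitely many short first-return words with $S_n\phi$ unbounded above, so no finite $K_q$ would exist, and the finite-entropy hypothesis is precisely what rules this out. By contrast the large-$n$ part is essentially bookkeeping with the nested definition of $\delta_{\phi,\infty}$, the only delicate point being the threshold $n\ge 2M-1$ at which a two-visit word is guaranteed to enter $B(n,M,q)$.
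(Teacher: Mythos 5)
Your proof is correct and follows essentially the same route as the paper's: the large-$n$ regime is handled by unwinding the nested limits defining $\delta_{\phi,\infty}$ and observing that a first-return cylinder lies in $B(n,M,q)$, and the finitely many small $n$ are absorbed into $K_q$ using the $\F$-property together with summable variations. If anything you are more careful than the paper, which leaves implicit the threshold $n\ge 2M-1$ needed for a two-visit word to satisfy $\#\{x_k\le q\}\le(n+1)/M$.
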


\begin{proof}
Let $\eps>0$ be given.  By definition there exists $q$ such that $\delta_{\phi,\infty}(q,M)<\delta_{\phi,\infty}+\frac{\eps}{2}$ for all $M$ large. Then there exists $N_\eps$ such that for all $n>N_\eps$ if $x\in \Sigma$ is such that $x_0,x_n\le q$, but $x_k>q$ for $i=1,\dots,n-1$, then
    $$\frac{1}{n}S_n\phi(x)<\delta_{\phi,\infty}(q,M)+\frac{\eps}{2}<\delta_{\phi,\infty}+\eps.$$
    Since the $\F$-property implies that for each $n$ the number of words of length $n$ which start and end at $[\le q]$  are finite, also using summable variations, $$K_q:=\max\left\{\max_{n\le N_\eps}\sup\left\{S_n\phi(x):x_0,x_n\le q\right\},0\right\}$$
    is finite and satisfies the lemma.
\end{proof}

Given $q\in \N$ as in the lemma, let $Y=[\le q]$ and define $\tau_Y: Y\to\N$ by $\tau_Y(x):=\inf\{n\ge 1: \sigma^n(x)\in Y\}$.  Then let $F:Y\to Y$ be the first return map $F=\sigma^{\tau_Y}$.  Let $\C^F$ denote the collection of 1-cylinders for $(Y, F)$, so that $Z\in \C^F$ implies that $F(Z)=[a]$ for some $a\le q$.  The topological transitivity of the original system means that there is some $J\in \N$ such that for any $Z, Z'\in \C^F$ there is $j\le J$ such that $Z'\subset F^j(Z)$, which is a stronger condition than the BIP property (see \cite{Sar03}).

Define the corresponding induced potential $\hat{\phi}_Y=\sum_{i=0}^{\tau_Y-1}\phi\circ\sigma^i$ and note that this has summable variations (in fact $var_1\hat{\phi}_Y<\infty$), so $B_{\hat\phi}<\infty$.  By for example \cite[Theorem 2]{Sar01}, $P(\hat\phi)\le 0$, so setting $\overline\phi=\hat\phi-P(\hat\phi)$ we have a normalised potential, \ie of zero pressure, and there is an $\overline\phi$-conformal measure $m_Y$ and an equivalent invariant measure $\mu_Y$, see \cite[Theorem 1]{Sar03}; also $B_{\overline\phi}=B_{\hat\phi}<\infty$.  Note that if $\phi$ is recurrent then $\hat\phi=\overline\phi$.  We also define $S^{F}_n\overline{\phi}_Y=\sum_{i=0}^{n-1}\overline{\phi}_Y\circ F^i.$  

\begin{lemma}\label{lem:meas_few_rets}
There is $C_1>0$ and $\eps>0$ such that if $Z\in \C_p^F$ and for some $p\ge1$, $$\sum_{i=0}^{p-1} \tau_Y(F^i(Z))=n$$ then
$$m_{Y}(Z)\leq C_1^p\exp{\left(n(\delta_{\phi,\infty}+\eps)\right)}.$$
\end{lemma}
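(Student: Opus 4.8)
The plan is to exploit the $\overline\phi_Y$-conformality of $m_Y$ to convert the measure of the induced cylinder $Z$ into a Birkhoff sum of the induced potential over $Z$, and then to bound that Birkhoff sum by splitting the orbit of a point of $Z$ into its $p$ first-return excursions and applying \Cref{lem:few_rets} to each one.

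First I would use conformality. Since $Z\in\C_p^F$, the map $F^p:Z\to F^p(Z)$ is a bijection onto a $1$-cylinder $F^p(Z)=[a]$ with $a\le q$, so conformality of $m_Y$ gives
$$m_Y([a])=\int_Z e^{-S^F_p\overline\phi_Y}\,dm_Y\ge e^{-\sup_Z S^F_p\overline\phi_Y}\,m_Y(Z).$$
Hence
$$m_Y(Z)\le e^{\sup_Z S^F_p\overline\phi_Y}\,m_Y([a])\le e^{\sup_Z S^F_p\overline\phi_Y},$$
using that $m_Y$ is a probability measure (otherwise one absorbs the finite factor $m_Y(Y)$ into the constant). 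No distortion estimate is needed for this one-sided bound, although $var_1\hat\phi_Y<\infty$ guarantees the supremum is finite.

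Next I would estimate $\sup_Z S^F_p\overline\phi_Y$. Writing $\overline\phi_Y=\hat\phi_Y-P(\hat\phi_Y)$ and recalling that $\hat\phi_Y$ is itself the Birkhoff sum of $\phi$ over one return block, for any $x\in Z$ we have
$$S^F_p\overline\phi_Y(x)=\sum_{i=0}^{p-1}\hat\phi_Y(F^i x)-pP(\hat\phi_Y)=S_n\phi(x)-pP(\hat\phi_Y),$$
since $\sum_{i=0}^{p-1}\tau_Y(F^i x)=n$. I would then decompose $S_n\phi(x)=\sum_{i=0}^{p-1}S_{\tau_i}\phi(F^i x)$ with $\tau_i:=\tau_Y(F^i x)$. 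Each block is a first-return orbit segment to $[\le q]$: its endpoints lie in $[\le q]$ and its interior coordinates exceed $q$, so \Cref{lem:few_rets} applies and gives $S_{\tau_i}\phi(F^i x)<K_q+\tau_i(\delta_{\phi,\infty}+\eps)$. Summing over $i$ and using $\sum_i\tau_i=n$ yields $S_n\phi(x)<pK_q+n(\delta_{\phi,\infty}+\eps)$, whence
$$S^F_p\overline\phi_Y(x)<p\left(K_q-P(\hat\phi_Y)\right)+n(\delta_{\phi,\infty}+\eps).$$
Taking the supremum over $x\in Z$ and substituting into the conformality bound gives the claim with $C_1:=e^{K_q-P(\hat\phi_Y)}$ (enlarged by $m_Y(Y)$ if $m_Y$ is not normalised), for the same $\eps$ and $q$ as in \Cref{lem:few_rets}.

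The step requiring the most care is the accounting of additive constants. Each of the $p$ first-return blocks contributes its own constant $K_q$ from \Cref{lem:few_rets} together with the normalising term $-P(\hat\phi_Y)\ge0$, and it is precisely this per-return overhead that produces the factor $C_1^p$ rather than a single bounded constant. This $p$-dependence is the price of working with a potentially unbounded number of excursions; it is harmless for the eventual \hyperref[def:SPR]{SPR} argument because the number of returns $p$ will later be controlled against $n$ through the hypothesis $\delta_{\phi,\infty}+h_\infty<P(\phi)$, so the present lemma need only isolate and quantify this overhead cleanly.
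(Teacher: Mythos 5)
Your proof is correct and follows essentially the same route as the paper: conformality of $m_Y$ with respect to $\overline\phi_Y$ reduces the bound on $m_Y(Z)$ to an upper bound on $\sup_Z S_n\phi - pP(\hat\phi)$, and the latter is obtained by splitting the orbit into its $p$ first-return excursions and applying \Cref{lem:few_rets} to each, yielding the same constant $C_1=e^{K_q-P(\hat\phi)}$. Your remarks on the per-excursion overhead producing $C_1^p$ and on absorbing $m_Y(F^p(Z))\le m_Y(Y)$ into the constant match what the paper does implicitly.
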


\begin{proof}
Writing $\tau_Y(F^i(Z)) =\tau_i$,  $Z$ is an $(n+1)$-cylinder with respect to $\sigma$ of the form 
$$[z_0,\dots,z_{\tau_1-1}, z_{\tau_1},\dots,z_{\tau_p-1},z_{\tau_p}]$$
where $z_0,z_{\tau_i}\leq q$ for $i=1,\dots,p$. By conformality and \Cref{lem:few_rets},
\begin{align*}    m_{Y}\left([z_0]\right)&=\int_{[z_0,\dots,z_{\tau_p-1}]}e^{-S_n\phi(x)+pP(\hat\phi)}\,dm_{Y}\\
& \geq m_{Y}([z_0,\dots,z_{\tau_p-1}])e^{-\sup_{x\in Z} S_n\phi(x)+pP(\hat\phi)}.
\end{align*}
Hence   $$ m_{Y}([z_0,\dots,z_{\tau_p-1}])\leq \exp\left(p(K_q-P(\hat\phi))+n(\delta_{\phi,\infty}+\eps)\right)m_Y([z_0]),$$
    so setting $C_1=e^{K_q-P(\hat\phi)}$, we are finished.
  \end{proof}

\begin{proof}[Proof of Theorem~\ref{thm:CI-SPR}]
The proof is similar to that of \cite[Theorem 7.14]{DobTod23}.  As we will see, by \eqref{eq:SPRequiv} it suffices to show the inducing scheme on some 1-cylinder $[a]$, that is the first return map $([a], \sigma^{\varphi_a})$ to $[a]$,  has an exponential tail.

    Pick $\eps>0$ such that 
    $$\delta_{\phi,\infty}+h_{\infty}<-4\eps,$$
    choose $q$ satisfying \Cref{lem:few_rets} and such that for all large $M$, \begin{equation}\label{eq:delta+h<eps}
        \delta_{\phi,\infty}+h_{\infty}(M,q)<-3\eps.
    \end{equation} 
    
    So for $(Y, F)$ as above, which must also satisfy \Cref{lem:meas_few_rets}, by topological transitivity there exists $N$ such that for all $Z\in \C_1^F$,
    \begin{equation}\label{eq:Y cover}Y\subset\bigcup_{j=1}^{N}F^{j}(Z).\end{equation}
Pick some 1-cylinder with respect to $\sigma$, $Y_0=[a]$, with $m_{Y}(Y_0)>0$ and let $m_{Y_0}$ be the conditional conformal measure here.

\begin{claim} There is some uniform constant $\beta>0$ such that for $Z_n\in\C_n^F$,
\begin{equation*}
\frac{m_{Y}\left(x\in Z_n: F^j(x) \notin Y_0, j=n, \ldots, n+N-1\right)}{m_{Y}(Z_n)}< e^{-\beta}.\label{eq:induction}
\end{equation*}
\label{claim:avoid}
\end{claim}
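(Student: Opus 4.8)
The plan is to show that from any first-return cylinder $Z_n$, a definite proportion of the $F$-measure returns to the distinguished cylinder $Y_0=[a]$ within the next $N$ steps, uniformly in $n$ and in $Z_n$. First I would fix $Z_n\in\C_n^F$ and consider the image $F^n(Z_n)$, which by the Markov structure of the induced system is a full cylinder $[b]$ for some $b\le q$. The covering property \eqref{eq:Y cover} guarantees that $Y_0\subset\bigcup_{j=1}^N F^j(W)$ for every $W\in\C_1^F$, so in particular there is some $j\in\{1,\dots,N\}$ and some $F$-cylinder $W\subset F^n(Z_n)$ of $F$-depth $j$ with $F^j(W)=Y_0$. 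Pulling this back under the bijection $F^n:Z_n\to F^n(Z_n)$ produces an $F$-cylinder $\tilde W\subset Z_n$ of depth $n+j$ with $F^{n+j}(\tilde W)=Y_0$, and points in $\tilde W$ do land in $Y_0$ at one of the times $n,\dots,n+N-1$. So it remains to show $m_Y(\tilde W)/m_Y(Z_n)$ is bounded below by a uniform positive constant; setting $e^{-\beta}:=1-\inf(m_Y(\tilde W)/m_Y(Z_n))$ then gives the claim.

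The key estimate is therefore a bounded-distortion / uniform-measure-ratio statement for the conformal measure $m_Y$ of the induced system. Using conformality of $m_Y$ with respect to $\overline\phi$ (of zero pressure) and the fact that $B_{\overline\phi}=B_{\hat\phi}<\infty$, I would write
\begin{equation*}
\frac{m_Y(\tilde W)}{m_Y(Z_n)} = \frac{m_Y(\tilde W)}{m_Y(F^n(\tilde W))}\cdot\frac{m_Y(F^n(Z_n))}{m_Y(Z_n)}\cdot\frac{m_Y(F^n(\tilde W))}{m_Y(F^n(Z_n))},
\end{equation*}
and control each factor. The outer two ratios are comparisons between an $F$-cylinder and its $F^n$- (respectively $F^{n+j}$-) image, and by conformality these are $\exp(S^F_\bullet\overline\phi)$ up to a distortion factor $e^{\pm B_{\overline\phi}}$ that is uniform by summable variations. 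Since $F^n(\tilde W)=W$ has $F$-depth $j\le N$ with $F^j(W)=Y_0$, and since by the strong transitivity (BIP-type) property there are only finitely many such short words $W$ with $F^j(W)=Y_0$, the middle ratio $m_Y(W)/m_Y([b])$ is bounded below by a positive constant depending only on $q$ and $N$. Combining, $m_Y(\tilde W)/m_Y(Z_n)\ge c_0>0$ uniformly.

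The main obstacle I anticipate is making the uniformity over \emph{all} $n$ genuinely independent of the depth $n$: the cancellation of the potentially large Birkhoff sum $S^F_n\overline\phi$ across $Z_n$ and its subcylinder $\tilde W$ must be exact up to the single bounded-distortion constant, rather than accumulating an $n$-dependent error. This is precisely where the finiteness of $B_{\overline\phi}$ (and the fact that $\overline\phi$ has zero pressure for the induced full-shift-like system with the BIP property) is essential, since it yields the Gibbs property for $m_Y$ giving $m_Y(Z_n)\asymp \exp(S^F_n\overline\phi(z))$ with constants independent of $n$. The remaining care is that the covering \eqref{eq:Y cover} chooses $j$ and $W$ depending on $F^n(Z_n)=[b]$, but since $b$ ranges over the finite set $\{1,\dots,q\}$ the bound is uniform. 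Once the Gibbs property is in hand the argument is a short computation, so I would state and use bounded distortion as the pivotal ingredient.
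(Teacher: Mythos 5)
Your argument is essentially the paper's own proof: both use the covering property \eqref{eq:Y cover} to locate a subcylinder of $F^n(Z_n)$ that returns to $Y_0=[a]$ within at most $N$ further $F$-steps, pull it back into $Z_n$, and combine conformality of $m_Y$ with the uniform distortion bound $B_{\overline\phi}<\infty$ and the finiteness of the relevant collection of short connecting words to bound the relative measure of the returning set below by a constant independent of $n$ and $Z_n$. The only differences are cosmetic (your three-factor decomposition versus the paper's direct two-line estimate, and a harmless indexing shift in the return-time window, which the paper handles by taking $0\le k(A)\le N-1$).
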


\begin{proof}[Proof of \Cref{claim:avoid}.] 
By \eqref{eq:Y cover}, for each $b\in\mathcal A$ such that $[b]\subset Y$ there is some cylinder (with respect to $F$) $A\subset [b]$  and $0\le k(A)\le N-1$ such that $F^{k(A)}(A) = [a]$. Denote the (finite) collection of such cylinders by $\B$.  In particular there is some $A\in \B$ such that $A\subset F^n(Z_n)$.   Letting  $A'=F^{-n}A\cap Z_n$, and it suffices to find a lower bound for $\frac{m_Y(A')}{m_Y(Z_n)}$, independent of $Z_n\in \C_n^F$ and $A\in \B$.

Then
 $\min_{A\in \B}\inf_{x\in A}S_{k(A)}^F\,\overline{\phi}_Y(x)$ is bounded from below by the finiteness of $\B$ and summable variations (similarly to \Cref{lem:finitelink}). 
 
By conformality, for any $C\subseteq Y$, if $F^m:C\to F^mC$ is injective, $m_Y(F^mC)=\int_C\exp(-S_m^F\,\overline{\phi}_Y)dm_Y$, hence
\begin{align*}
    \frac{m_Y(A')}{m_Y(Z_n)}&\ge\frac{m_Y(A)}{m_Y(F^{n}Z_n)}\exp{\left(-\sup_{Z_n}S_{n}^F\,\overline{\phi}_Y+\inf_{A'}S_{n}^F\,\overline{\phi}_Y\right)}\\
    &\ge\frac{m_Y([a])}{m_Y(Y)}e^{-B(\overline{\phi}_Y)}e^{\inf_{x\in A}S^F_{k(A)}\,\overline{\phi}_Y(x)}>0,
\end{align*}
uniformly, as required.
\end{proof}

\begin{claim} For each $k\ge1$,
$$\frac{m_{Y}\left(x\in Y_0: F^j(x) \notin Y_0, j=1, \ldots, kN\right)}{m_{Y}(Y_0)} < e^{-k\beta}.$$
\label{claim:multavoid}
\end{claim}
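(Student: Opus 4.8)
The plan is to prove this by induction on $k$, using \Cref{claim:avoid} as the single inductive step and exploiting the fact that each ``avoidance'' event is a disjoint union of $F$-cylinders. Write $E_k := \{x\in Y_0 : F^j(x)\notin Y_0,\ j=1,\ldots,kN\}$, so that the claim asserts $m_Y(E_k) < e^{-k\beta}m_Y(Y_0)$.

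First I would record the base case. Since $a\le q$, the $\sigma$-cylinder $Y_0=[a]$ is also an element of $\C_1^F$, so applying \Cref{claim:avoid} with $n=1$ and $Z_1=Y_0$ gives exactly $m_Y(E_1)<e^{-\beta}m_Y(Y_0)$, because the window $j=1,\ldots,N$ there coincides with the defining condition of $E_1$.

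For the inductive step, the key structural observation is that $E_{k-1}$ is determined by the $F$-coordinates $0,1,\ldots,(k-1)N$ of a point, and hence is a countable disjoint union of cylinders $Z\in\C_{(k-1)N+1}^F$, namely those $Z\subset Y_0$ whose points satisfy $F^j(x)\notin Y_0$ for $j=1,\ldots,(k-1)N$. For each such $Z$ I would apply \Cref{claim:avoid} with $n=(k-1)N+1$: the window $j=n,\ldots,n+N-1$ is then exactly $j=(k-1)N+1,\ldots,kN$, and intersecting its avoidance condition with the membership $Z\subset E_{k-1}$ yields precisely $E_k\cap Z$. Thus $m_Y(E_k\cap Z)<e^{-\beta}m_Y(Z)$, and summing over the countably many $Z$ partitioning $E_{k-1}$ gives $m_Y(E_k)<e^{-\beta}m_Y(E_{k-1})$. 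Combined with the inductive hypothesis $m_Y(E_{k-1})<e^{-(k-1)\beta}m_Y(Y_0)$ this closes the induction.

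The step requiring the most care is the index bookkeeping: one must check that constraining $F$-coordinates $0,\ldots,(k-1)N$ really does express $E_{k-1}$ as a disjoint union of $\C_{(k-1)N+1}^F$-cylinders, and that the window $n,\ldots,n+N-1$ of \Cref{claim:avoid} with $n=(k-1)N+1$ appends exactly the $F$-steps $(k-1)N+1,\ldots,kN$ needed to pass from $E_{k-1}$ to $E_k$. The other point to emphasise is that the bound $e^{-\beta}$ supplied by \Cref{claim:avoid} is \emph{uniform} in both the cylinder $Z_n$ and its length $n$; this uniformity is precisely what allows the per-cylinder estimates to be summed and then iterated without loss, producing the geometric factor $e^{-k\beta}$.
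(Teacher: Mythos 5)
Your argument follows the same route as the paper's: induction on $k$, with \Cref{claim:avoid} supplying a uniform per-cylinder bound that is then summed over the $F$-cylinders composing the avoidance event, and the index bookkeeping in your inductive step is correct. The one genuine flaw is the base case: $Y_0=[a]$ is \emph{not} an element of $\C_1^F$. A $1$-cylinder for $(Y,F)$ is determined by an entire first-return word to $Y$, so $[a]$ is a (generally infinite) union of such cylinders, and \Cref{claim:avoid} cannot be invoked with $Z_1=Y_0$: its proof relies on $F^n$ being injective on $Z_n$, which fails on $[a]$ since $F$ collapses all the first-return cylinders inside $[a]$. The repair is exactly the decomposition-and-summation you already perform in the inductive step, applied to the partition of $Y_0$ into its $\C_1^F$-cylinders (the paper phrases this via the mediant inequality $\frac{a+c}{b+d}\le\max\left\{\frac ab,\frac cd\right\}$, but summing the per-cylinder inequalities works equally well). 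With that correction the proof matches the paper's.
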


\begin{proof}[Proof of \Cref{claim:multavoid}.]
This claim is proved by induction. As $Y_0=[a]$ can be written as a union of $1$-cylinders with respect to $F$, \Cref{claim:avoid} and the fact that for all positive numbers $a,b,c,d$, $\frac{a+c}{b+d}\le\max\left\{\frac ab,\frac cd\right\}$, together implies
$$\frac{m_{Y}\left(x\in Y_0: F^j(x) \notin Y_0, j=1, \ldots, N\right)}{m_{Y}(Y_0)} < e^{-\beta}.$$
Assume inductively that for each $i\ge1$,
$$\frac{m_{Y}\left(x\in Y_0: F^j(x) \notin Y_0, j=1, \ldots, iN\right)}{m_{Y}(Y_0)} < e^{-i\beta}.$$
Defining the set 
$$\mathcal{Z}_i:=\left\{Z\in \mathcal{C}^F_{iN+1}:Z\subset Y_0,\, F^j(Z) \notin Y_0, \,j=1, \ldots, iN\right\},$$ by \Cref{claim:avoid} and the inequality above:
\begin{align*}
& \frac{m_{Y}\left(x\in Y_0: F^j(x) \notin Y_0, j=1, \ldots, (i+1)N\right)}{m_{Y}(Y_0)}
 \\
&= \frac1{m_{Y}(Y_0)} \sum_{Z\in \mathcal{Z}_i} m_{Y}(Z) \frac{m_{Y}\left(x\in Z: F^j(x) \notin Y_0, j=iN+1, \ldots, (i+1)N\right)}{m_{Y}(Z_{i(N+1)})}\\
& < \frac1{m_{Y}(Y_0)}  \sum_{Z\in\mathcal{Z}_i} m_{Y}(Z) e^{-\beta}\leq e^{-\beta}\frac{m_{Y}\left(x\in Y_0:F^j(x)\notin Y_0,\,j=1,\dots,iN\right)}{m_{Y}(Y_0)}\\
&< e^{-(i+1)\beta}.\qedhere\end{align*}
\end{proof}

Letting $T=\gamma n$ for some $\gamma\in (0, 1)$ to be determined later, we can split the set $\left\{x\in Y_0:\varphi_a(x)=n\right\}$ depending on whether $x$ visits $Y$ more or less than $T$ times in its first $n$ symbols, which can be written
    \begin{align*}
        m_{Y_0}\left(\{\varphi_a=n\}\right)&\le \,m_{Y_0}\left(\left\{\varphi_a(x)=n,\sum_{j=0}^{T}\tau_Y(F^j(x))>n\right\}\right)\\&\quad+m_{Y_0}\left(\left\{\varphi_a(x)=n,\sum_{j=0}^{T}\tau_Y(F^j(x))\le n\right\}\right)\\
        &=: I+II.
    \end{align*}
    By \Cref{claim:multavoid},
    \begin{align*}
    I\le \sum_{p=T}^n\exp{\left(-\frac{p}{N}\beta\right)}\le C_2\exp{\left(-\frac{T}{N}\beta\right)},
    \end{align*}
    for some $C_2\in\R$. The number of $n$-cylinders with respect to $\sigma$ which spend a proportion $\gamma\le 1/M$ of their $\sigma$-iterates up to $n$ in $Y$ is no more than $\#B(n,M,q)$.  Moreover, for all large $n$, $\#B(n,M,q)\le C_3e^{n(h_\infty(M,q)+\eps)}$ for some $C_3>0$, so combining this with \Cref{lem:meas_few_rets} and \eqref{eq:delta+h<eps} we get 
    \begin{align*}
    II&\le C_1^T\exp\left(n(\delta_{\phi,\infty}+\eps)\right)\#B(n,M,q)\\
    &\le C_1^TC_3\exp{\left(n(\delta_{\phi,\infty}+h_{\infty}(M,q)+2\eps)\right)}\le C_1^TC_3\exp{(-n\eps)}.
    \end{align*}
Then choosing $\gamma=\min\left\{\frac{1}{M},\frac{\eps}{2\log{C_1}}\right\}$, both $I$ and $II$ are exponentially small so that 
     $$\limsup_{n\to\infty}\frac{1}{n}\log{m_{Y_0}(\{\varphi_a=n\})}<0.$$ 
     As $m_{Y_0}$ is conformal, 
     $$m_{Y_0}(\{\varphi_a=n\})\asymp \sum_{\sigma^nx=x, \varphi_a(x)=n}e^{S_n\phi(x)-i(x)P(\hat\phi)} \ge Z_n^*(\phi, a),$$
     where $i(x)$ corresponds to the number of hits to $Y$ before $Y_0$.  Hence  \eqref{eq:SPRequiv} holds and the system is strongly positive recurrent. 
\end{proof}

\Cref{thm:CI-SPR} means that  (\hyperref[UCS]{UCS}) implies (\hyperref[def:SPR]{SPR}).
For the other direction of \Cref{thm:UCS-SPR}, it suffices to prove the statement under topological mixing since we can use \emph{spectral decomposition}, a tool to reduce arguments on topologically transitive to topologically mixing. Briefly speaking, if $(\Sigma,\sigma)$ is a topologically transitive CMS with period $p$, the alphabet is divided into $p$ equivalence classes $\{\mathcal{A}_1\dots,\mathcal{A}_{p-1}\}$ and $\Sigma=\biguplus_{i=0}^{p-1}\Sigma_i$, $\Sigma_i=\left\{x\in\Sigma:x_0\in\mathcal{A}_i\right\}.$ Then $(\Sigma_i,\phi_p,\sigma^p)$ is conjugate to a topological mixing CMS and most statements (especially those in this paper) proved for $(\Sigma_i,\phi_p,\sigma^p)$ remain valid for the original CMS. For more detailed discussion, see for example \cite[\S2.2,\S6]{RuhSar22}.

\begin{proposition}Under the assumptions of Theorem~\ref{thm:UCS-SPR}, (\hyperref[def:SPR]{SPR}) implies (\hyperref[UCS]{UCS}).\label{SPR implies UCS}
\end{proposition}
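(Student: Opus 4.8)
The plan is to prove the contrapositive: assuming (\hyperref[UCS]{UCS}) fails, I will show that \eqref{eq:SPRequiv} is violated for a fixed state $a$, so the system is not (\hyperref[def:SPR]{SPR}). Throughout $P(\phi)=0$ and, by the spectral decomposition reduction described above, I assume topological mixing. If (\hyperref[UCS]{UCS}) fails then $\chi_{per}(\phi)\ge 0$; but every periodic orbit carries an invariant measure $\mu$ with $\int\phi\,d\mu\le P(\phi)=0$, so in fact every periodic average is $\le 0$ and $\chi_{per}(\phi)=0$. Moreover, by the equivalence (\hyperref[UCS]{UCS})$\iff$(\hyperref[CI]{CI}) of Theorem~\ref{thm:contract}, (\hyperref[CI]{CI}) also fails, so $\delta_{\phi,\infty}\ge 0$, whence by monotonicity $\delta_{\phi,\infty}(M,q)\ge\delta_{\phi,\infty}\ge 0$ for every $M$ and $q$. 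In particular, fixing a state $a$ and $q\ge a$, for every $M$ and $\eps>0$ there are infinitely many $n$ admitting a cylinder $[x_0,\dots,x_n]\in B(n,M,q)$ with $\frac1nS_n\phi>-\eps$.

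The core construction turns such an escaping cylinder into a long first-return loop to $[a]$ of near-zero average. Given $[x_0,\dots,x_n]$ as above, I close it into a periodic point $z$ through $a$ by prepending a bridge from $a$ to $x_0$ and appending one from $x_n$ back to $a$; since $x_0,x_n\le q$, topological transitivity together with the $\F$-property supplies bridges of length and cost bounded in terms of $q$ (as in \Cref{lem:finitelink}). Thus $|z|=n+O_q(1)$ and $S_{|z|}\phi(z)\ge S_n\phi(x)-C(q)$ for a constant $C(q)$. The key gain is that, because the cylinder lies in $B(n,M,q)$, at most $(n+1)/M$ of its symbols lie in $[\le q]$; hence $z$ visits $a$ at most $m:=(n+1)/M+O_q(1)$ times.

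Next I decompose one period of $z$ at its successive visits to $a$ into first-return excursions $y_1,\dots,y_m$, each a genuine first-return loop ($\varphi_a(y_i)=n_i$) with $\sum_i n_i=|z|$ and periodic averages $\gamma_i\le\chi_{per}(\phi)=0$. Writing $\eps':=\eps+B_\phi/M$, summable variations give $\sum_i n_i\gamma_i\ge S_{|z|}\phi(z)-mB_\phi\ge -\eps'|z|-O_q(1)$. Since all $\gamma_i\le 0$, a Markov-type estimate shows the excursions with $\gamma_i<-\sqrt{\eps'}$ occupy total length at most $\sqrt{\eps'}\,|z|+O_q(1)/\sqrt{\eps'}$; the remaining ``good'' excursions fill a proportion $\ge 1-\sqrt{\eps'}-o(1)$ of $|z|$ while numbering at most $m$. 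Hence some good excursion $y$ has length $n_y\gtrsim M(1-\sqrt{\eps'})$ once $n$ is large. Taking $\eps_j\to 0$ and $M_j\to\infty$ (so $\eps'_j\to 0$) and, for each $j$, choosing $n_j$ large, this produces first-return loops $y^j$ with $n_{y^j}\to\infty$ and averages in $[-\sqrt{\eps'_j},0]$, so that $\frac1{n_{y^j}}S_{n_{y^j}}\phi(y^j)\to 0$.

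Finally, since $Z^*_{n_{y^j}}(\phi,a)\ge \exp\!\big(S_{n_{y^j}}\phi(y^j)\big)$, we obtain $\frac1{n_{y^j}}\log Z^*_{n_{y^j}}(\phi,a)\to 0$ with $n_{y^j}\to\infty$, so $\limsup_n\frac1n\log Z_n^*(\phi,a)\ge 0=P(\phi)$, contradicting (\hyperref[def:SPR]{SPR}) through \eqref{eq:SPRequiv}. I expect the main obstacle to be the extraction step: one must produce a single excursion that is simultaneously \emph{long} and of \emph{near-zero average}. Length is forced by the $1/M$ escaping bound (few returns to $[\le q]$, hence few returns to $a$), while the near-zero average is forced by all excursion averages being $\le 0$ despite their length-weighted mean lying close to $0$; reconciling these requires committing to the order of limits $\eps\to 0$, then $M\to\infty$, then $n\to\infty$. (Note that this direction uses only the $\F$-property implicit in the hypothesis $h_\infty=0$, rather than $h_\infty=0$ itself.)
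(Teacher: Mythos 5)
Your proof is correct, but it takes a genuinely different route from the paper's. The paper argues by contradiction directly from SPR: it extracts from SPR both the finiteness of $P(\overline{\phi+2\eps_a})$ and the uniform contraction \eqref{eq:SPRcont} on long first-return words, takes a sequence of periodic orbits with Birkhoff averages tending to $0$, and splits into two cases according to whether these orbits visit the distinguished state $a$ --- in Case 1 it builds first-return loops of slowly decaying weight to force $P(\overline{\phi+\eps_a})=\infty$, and in Case 2 it decomposes each orbit into first-return words, shows the proportion of time spent in long excursions vanishes, and concludes that the periodic measures concentrate on a compact part, contradicting Lemma~\ref{lem:noeq}. You instead prove the contrapositive by routing the failure of (\hyperref[UCS]{UCS}) through Theorem~\ref{thm:contract} to obtain the failure of (\hyperref[CI]{CI}), and then convert escaping cylinders of near-zero average into long first-return loops of near-zero average by closing them up through $a$ and running a Markov-inequality/pigeonhole extraction on the excursion decomposition, violating \eqref{eq:SPRequiv} directly. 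In effect you establish the a priori stronger implication that SPR implies (\hyperref[CI]{CI}), and your extraction is quantitative; the price is that you lean on the hardest implication of Theorem~\ref{thm:contract}, namely Proposition~\ref{prop:CI-UCS}, which itself rests on Lemma~\ref{lem:noeq} --- so the two proofs ultimately share that compactness ingredient, just packaged differently. Two minor points: (i) your appeal to Theorem~\ref{thm:contract} requires the $\F$-property, which is not literally listed in Theorem~\ref{thm:UCS-SPR}; you flag this, and the paper's own proof makes the same implicit use of it (cf.\ Remark~\ref{rem:Fprop}), so this is consistent rather than a gap; (ii) in your extraction the bound $\gamma_i\le 0$ on every excursion average is automatic from $h(\mu)+\int\phi\,d\mu\le P(\phi)=0$ for periodic measures, so the failure of (\hyperref[UCS]{UCS}) is not used there beyond its consequence that (\hyperref[CI]{CI}) fails.
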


    \begin{proof}
         By (\hyperref[def:SPR]{SPR}) there is  $a\in\mathcal{A}$ such that $\Delta_a[\phi]>0$. First by \cite[Lemma 3]{Sar01}, $P(\phi)=0$ implies the induced pressure on $[a]$, $P(\overline{\phi})$, is zero, and (\hyperref[def:SPR]{SPR}) implies that there exists $\eps_a>0$ such that 
        \begin{equation}
        \label{eq:2eps<infty}P(\overline{\phi+2\eps_a})<\infty.
        \end{equation}
 Moreover, as in \eqref{eq:SPRequiv}, there exists $N_a\in\N$ such that for all $n>N_a$, all $x$ such that $\varphi_a(x)=n$, 
 \begin{equation}
 \frac{1}{n}S_n\phi(x)<-\eps_a.
 \label{eq:SPRcont}
 \end{equation}       
        
        Suppose by contradiction that $\chi_{per}(\phi)= 0$; take $\phi'$ as in \Cref{lem:phi'} and $\chi_{per}(\phi')=\chi_{per}(\phi)$. Then there exists a sequence of periodic points $x^1,x^2,\dots,$ with period $p_1,p_2,\dots$ and Birkhoff averages 
        \begin{equation}\label{eq:BA lower bounds}
        s_n=\frac{1}{p_n}S_{p_n}\phi(x^n)=\frac{1}{p_n}S_{p_n}\phi'(x^n)>-\eps_a \text{ and } \lim_{n\to \infty}s_n=0.
        \end{equation}  
         
\textbf{Case 1.} Suppose there exists $x\in\left\{x^1,\dots\right\}$ such that $\forall k\ge 0$, $x_k\ne a$.  Then as in \eqref{eq:perlink}, by topological transitivity, there are words ${v}$, ${w}$ of length $\ell_1=\ell(a,x_0)$ and $\ell_2=\ell(x_{n-1},a)$ respectively such that $[{v}]_0=a,$ $vx_0\in \Sigma_{|v|+1}$, $x_{n-1}w\in \Sigma_{|w|+1}$,  ${w}a\in \Sigma_{|w|+1}$, hence
        $${v}x_0,\dots,x_{n-1}{w}\in \Sigma_{\ell_1+n+\ell_2}.$$
 Moreover, for each $k\in\N$ and $n_k=kn+\ell_1+\ell_2$ there is a periodic point $y(k)\in[a]$ with $\varphi_a(y(k))=n_k$ of the form:
        $$y(k)=\left(\overline{{v}\left(x_0,\dots,x_{n-1}\right)^k{w}}\right)$$ where $\left(x_0,\dots,x_{n-1}\right)^k$ means the string is repeated $k$ times. By summable variations, there exists a constant $C>0$ such that for all $k$,
        \begin{equation*}
       Z_{n_k}^*(\phi,a)\ge \exp{\left(S_{n_k}\phi(y(k))\right)}\ge \exp{\left(C-kn\eps_a\right)}.
        \end{equation*}
        Then as in \cite[(5)]{Sar01}, $\left|P(\overline{\phi+p})-\log\sum_{k\ge1}e^{kp}Z_k^*(\phi,a)\right|\le B_{\phi}$, therefore,
        $$    
        \infty=\log{\sum_{k=1}^{\infty}e^{n_k\eps_a+C}e^{S_{n_k}\phi(y(k))}}\le C+\log\sum_{n=1}^{\infty}e^{n\eps_a}Z^*_n(\phi,a)\le P(\overline{\phi+\eps_a})+B_\phi+C,
        $$
        which is a contradiction to \eqref{eq:2eps<infty} since $B_\phi<\infty$.\\
        \textbf{Case 2.}
        Now suppose all $x\in\{x^1,\dots\}$ contain state $a$. Without loss of generality one can suppose $x^i_0=a$ for all $i$ by periodicity.   Let $\A_a\subset\Sigma^*$ be words $w$ where $[w]_i=a$ if and only if $i=0$, and moreover ${w}a\in \Sigma_{|w|+1}$, the set of \emph{first return words} to $a$.  \
        
        For all $n,$ 
        $$x^n=\left(\overline{{w}_0\dots{w}_{k_n-1}}\right) \text{ for some $k_n\ge1$, }{w}_i\in \A_a \text{ and } \sum_{j=0}^{k_n-1}|{w}_j|=p_n;$$
        That is, each $x^n$ can be decomposed into several first return words.
        
        As $S_m\phi(x)=S_m\phi'(x)$ for any periodic point with period $m$, \eqref{eq:SPRcont} implies that for all first return word $\alpha$ with length longer than $N_a$, 
        \begin{equation*}
       \sup_{x\in[\alpha a]}S_{|\alpha|}\phi'(x)<-|\alpha|\eps_a\label{eq:strictly negative word length}
        \end{equation*}
        Letting $\A_{a, >k}:= \{w\in \A_a: |w|>k\}$, re-define the proportion function similarly to \eqref{eq:proportion_fn},
         $$\zeta:\left\{x^1,\dots\right\}\rightarrow[0,1],$$$$\zeta(x^n)=\frac{1}{p_n}\sideset{}{'}\sum_{{w}\in\left\{ x^n_0\dots x^n_{p_n-1}\cap \A_{a, >N_a} \right\}}|{w}|,$$
          where $\Sigma'$ again means that we count with multiplicity.
        Then repeating \eqref{eq:proportion} with $\eps=\eps_a$ this definition ensures $\lim_{n\to \infty}\zeta(x^n)=0$ since otherwise we contradict the property \eqref{eq:BA lower bounds} of our periodic points.  By the $\F$-property, we can define the function $q_a$ by
                   \begin{equation*}
                q_a(N):=\min\left\{ q\in\N: \begin{split}&\text{if } {w}\in\mathcal{A}_a \text{ and } |{w}|\le N,\\[-1mm]
                 &\text{ then } [w]_i\le q \text{ for } i=0, \ldots, |w|-1\end{split} \right\}. 
            \end{equation*}
            The sequence of probability measures 
            $$\nu_n=\frac{1}{p_k}\sum_{j=0}^{p_n-1}\delta_{\sigma^jx^n}$$ 
            satisfies $\lim_{n}\nu_n([\le\! q_a(N_a)])=1.$ But since $\lim_{k\rightarrow\infty}s_{n_k}=0$, we have a contradiction to \Cref{lem:noeq}, hence such sequence of periodic points does not exist.
            \end{proof}

This concludes the proof of Theorem~\ref{thm:UCS-SPR}.

\section{Examples}
\label{sec:examples}

The conditions for our main results are weak, so there are many CMS satisfying these, but in this section we focus on a set of examples which simultaneously represent a broad class of CMS for which our theory applies as well as exhibiting edge cases which demonstrate the sharpness of our results.

Our examples take the form of `bouquet' Markov graphs, see  \cite{Rue03, Rue19}.  We note that we could have used other shift models, eg $S$-gap shifts, to demonstrate the sharpness of our results, but the bouquet setup covers these as well as all other topologically transitive CMS, as we note at the end of Section~\ref{ssec:bouqind}.

Some of the examples which inspired this work come from codings for dynamical systems, particularly in the case of  interval maps $f:[0, 1]\to [0, 1]$: here a subset $Y\subset [0, 1]$ is taken and some return time $\tau:Y\to \N\cup\{\infty\}$ is chosen so that $F= f^\tau$, an \emph{inducing scheme} defines a Markov map on $Y$, i.e., there is a partition $\{Y_i\}_i$ such that $\tau|_{Y_i}$ is some constant $\tau_i$ and $F(Y_i)$ is a collection of elements of this partition.  We can associate bouquet Markov graph to this, described in more detail below, which then defines a shift, and we can take the potential $\phi:I\to [-\infty, \infty]$ and lift this to the symbolic model.  For example, such a coding can be done for general multimodal maps of the interval, as shown in, for example, \cite[Theorem 3]{BruTod09}, or more classical and specific inducing schemes like those given in \cite{BruLuzStr03} (which include Collet-Eckmann maps).  Note that our framework here does not give new results in the specific setups in the references mentioned.

\subsection{Bouquet setup}
 
Following \cite{Rue03, Rue19}, let $a:\N\to \N_0$ with $a(1)=1$.   We define our set of vertices as 
$$V:= \{r\} \cup \bigcup_{n=1}^\infty\left\{ v_k^{n, i}: 1\le i\le a(n), \ 1\le k\le n-1\right\},$$
where all vertices with distinct labels above are distinct vertices.  We call $r$ the \emph{root}. For notational convenience write $v_0^{n, i}= v_n^{n, i} =r$.   
Then the only allowed transitions in our Markov graph are  $v_k^{n, i} \mapsto v_{k+1}^{n, i}$ for $0\le k\le n-1$.   This defines a \emph{bouquet} of loops: with $a(n)$ disjoint \emph{simple} loops (from $r$ back to $r$) of length $n$.  The resulting shift space which we refer to as a \emph{bouquet shift} is $\Sigma=\Sigma_V$: it has $a(n)$ periodic cycles of prime period $n$.

Below we will make various choices of $(a(n))_n$ and potentials $\phi:\Sigma_V\to \R$.  Our analysis will be via first returns to $[r]$.  Note that \cite{Rue03, Rue19} were concerned with measures of maximal entropy (in which case we set $\phi\equiv -h_{top}(\sigma)$), rather than the more general setting of equilibrium states that we are interested in here.

For various calculations in this section we must replace $V$ with $\N$.  This can be done by listing the elements of $V$ in the natural order, starting with $r$.  Before moving to more specific examples we prove a useful lemma, which applies to all bouquet shifts.

\begin{lemma}
$h_\infty=\limsup_{n\to \infty} \frac1n\log a(n) $.
\label{lem:hinfa(n)}
\end{lemma}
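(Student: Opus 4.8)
Write $A := \limsup_{n\to\infty}\frac1n\log a(n)$; the plan is to prove $h_\infty\le A$ and $h_\infty\ge A$ separately, exploiting that the root $r$ is the only vertex of the bouquet with out-degree (and in-degree) larger than one. Consequently every admissible path is determined by its starting vertex together with the sequence of simple loops chosen each time the orbit passes through $r$: between two consecutive visits to $r$ the path traverses exactly one simple loop, and there are $a(m)$ loops of length $m$. I would record at the outset two elementary facts. First, after identifying $V$ with $\N$, each $[\le q]$ consists of $r$ together with finitely many non-root vertices, so only finitely many simple loops meet $[\le q]$ in their interior. Second, since $A=\limsup\frac1n\log a(n)$ and each $a(n)<\infty$ (by the $\F$-property), for every $\eps>0$ the constant $\bar a_\eps:=\sup_m a(m)e^{-m(A+\eps)}$ is finite, so $a(m)\le \bar a_\eps\, e^{m(A+\eps)}$ for all $m$.

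For the upper bound, fix $\eps>0$, $q$ and $M$, and take any cylinder $[x_0,\dots,x_n]\in B(n,M,q)$. Since $r\le q$, the number of indices $k$ with $x_k=r$ is at most $\#\{k:x_k\le q\}\le \frac{n+1}{M}$, so the path visits $r$ at most $\frac{n+1}{M}$ times. It then decomposes as an initial partial loop determined by $x_0$, a block of complete simple loops (one between each pair of consecutive visits to $r$), and a final partial loop ending at $x_n$; the two end pieces contribute at most $\#[\le q]\le q$ choices each, while the number of complete loops is at most $\frac{n+1}{M}$. Summing over the admissible loop-length compositions $m_1+\dots+m_p$ (with $p\le\frac{n+1}{M}$) and using $\prod_l a(m_l)\le \bar a_\eps^{\,p}\,e^{(A+\eps)\sum_l m_l}\le \bar a_\eps^{\,p}\,e^{(A+\eps)n}$, together with the fact that the number of such compositions is at most $e^{n\eps_M}$ with $\eps_M\to0$ as $M\to\infty$ (a binary-entropy estimate for compositions into at most $n/M$ parts), I would obtain
$$\tfrac1n\log z_n(M,q)\le \tfrac{2\log q}{n}+\eps_M+\tfrac{\log\bar a_\eps}{M}+A+\eps.$$
Taking $\limsup_{n}$, then $M\to\infty$, then $q\to\infty$, and finally $\eps\to0$ gives $h_\infty\le A$.

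For the lower bound, fix $q,M$ and choose a length $L\ge 2M$ lying in a subsequence realising the $\limsup$, so that $\frac1L\log a(L)$ is as close to $A$ as desired and $a(L)\to\infty$ along it. Since only finitely many loops, say $c_q$ of them, meet $[\le q]$ in their interior, at least $a(L)-c_q$ of the length-$L$ loops have interior contained in $[>q]$. Concatenating $p$ such ``good'' loops based at $r$ produces $(pL+1)$-cylinders that start and end at $r\le q$ and meet $[\le q]$ exactly $p+1$ times; as $p+1\le\frac{pL+1}{M}$ when $L\ge 2M$, each lies in $B(pL,M,q)$, and distinct loop-choices give distinct cylinders, so $z_{pL}(M,q)\ge (a(L)-c_q)^p$. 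Letting $p\to\infty$ gives $h_\infty(M,q)\ge\frac1L\log(a(L)-c_q)$ for every admissible $L$; since $c_q$ is fixed and $a(L)\to\infty$, the right-hand side tends to $A$ along the subsequence, whence $h_\infty(M,q)\ge A$ for all $M,q$, and therefore $h_\infty\ge A$.

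I expect the main obstacle to be the careful bookkeeping rather than any single hard estimate: verifying that the root is genuinely the only branch point, so the loop-concatenation decomposition (and hence the product formula $\prod_l a(m_l)$) is exact; controlling the two partial end-loops uniformly in $q$; and checking that the composition-counting factor has exponential rate $\eps_M\to0$ as $M\to\infty$. One should also confirm that the identification of $V$ with $\N$ affects none of the limiting quantities, which is precisely the assertion that each $[\le q]$ is finite and meets only finitely many loops.
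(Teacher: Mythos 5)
Your overall strategy is the same as the paper's: a lower bound obtained by exhibiting concatenations of simple loops whose interiors avoid $[\le q]$, and an upper bound obtained by decomposing each cylinder in $B(n,M,q)$ into a forced initial piece, a string of complete loops counted via compositions of $n$ into at most $n/M$ parts, and a forced final piece. Your bookkeeping for the upper bound is sound (indeed your observation that the two end pieces are each determined by $x_0$, respectively $x_n$, is cleaner than the paper's prefix/suffix sets, and your binary-entropy count of compositions is a legitimate sharpening of the paper's cruder $2^{n/M}$ bound).

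There is, however, one step in your lower bound that fails: you bound the number of ``good'' length-$L$ loops below by $a(L)-c_q$ and then assert that $a(L)\to\infty$ along the subsequence realising the $\limsup$. That assertion is only guaranteed when $A>0$. When $A=0$ and $a$ is bounded --- for instance $a(n)\equiv 1$, which is precisely the example used in Section 5.2 of the paper --- you can have $a(L)-c_q\le 0$ for all large $q$, and $\frac1L\log(a(L)-c_q)$ gives you nothing, so your argument does not establish $h_\infty\ge 0$ in that case. The fix is the observation the paper makes: under the natural identification of $V$ with $\N$, the finitely many loops whose interiors meet $[\le q]$ all have length at most some $n_q$, so for every $L>n_q$ \emph{all} $a(L)$ loops of length $L$ are good. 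This yields $z_{pL}(M,q)\ge a(L)^p$ and hence $h_\infty(M,q)\ge \frac1L\log a(L)$ for all large $L$ in the subsequence, which tends to $A$ without any need for $a(L)\to\infty$. With that one repair your proof is complete and matches the paper's.
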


\begin{proof}
Suppose that $\limsup_{n\to \infty} \frac1n\log a(n) =\log\lambda$, which we may assume is finite, as otherwise the conclusion is immediate.   Then for $\eps>0$ there is $C>0$ such that for an infinite sequence of $n\in\N$,
$$ \frac1C\lambda^{n(1-\eps)}\le a(n) \le C\lambda^{n(1+\eps)},$$
and indeed the upper bound holds for all $n\in \N$.

We first show that $h_\infty\ge \limsup_{n\to \infty} \frac1n\log a(n)$. Notice that for $M, q\in \N$, if $n$ is large enough so that $(n+1)M\ge1$ and so that any of the simple loops of length $n$ only intersect $[\le q]$ at the root $1$ (for example if $n>n_q$ where $q\le \sum_{k=1}^{n_q}a(k)$), we have $z_n(M, q) \ge  a(n)\ge \frac1C\lambda^{n(1-\eps)}$ for an infinite sequence of such $n$, hence the required lower bound holds.

For the upper bound, let
$$\P(q):=\left\{u\in \Sigma^*: [u]_i\le q \text{ for }i=0, \ldots |u|-1\text{ and } u1\in \Sigma_{|u|+1}\right\},$$
$$\mathcal{G}(q):=\left\{v\in \Sigma^*: [v]_0, [v]_{|v|-1}=1 \right\}.$$
$$\mathcal{S}(q):=\left\{w\in \Sigma^*: [w]_i\le q \text{ for }i=0, \ldots |w|-1\text{ and } 1w\in \Sigma_{|w|+1}\right\}$$ 
be the set of $q$-prefixes, $q$-good words, and $q$-suffixes respectively.  Then any cylinder $[x_0, \ldots,x_n]\in B(n, M, q)$ can be decomposed so that $x_0, \ldots,x_n = uvw$ where $u\in \P(q), v\in \mathcal{G}(q), w\in \mathcal{S}(q)$.

Then  
\begin{equation*}
z_n(M, q) \le \#\left\{uvw\in \Sigma_{n+1}: \begin{split} & u\in \P, v\in \mathcal{G}, w\in \mathcal{S} \\[-3mm]
&\text{ and } \#\{0\le i\le |v|-1: [v]_i=1\}\le \frac{|v|}M\end{split}\right\}
\end{equation*}

which can be bounded above by $\#\P(q)\#\mathcal{S}(q)\tilde z_n(M, q)$ where
\begin{align*}
\tilde z_n(M, q)&\le \sum_{k\le n/M}\sum_{i_1+ \cdots+i_k=n} a(i_1)\cdots a(i_k) \\
&\le C \sum_{k\le n/M}\sum_{i_1+ \cdots+i_k=n} \lambda^{n(1+\eps)}\le C2^{\frac nM}\lambda^{n(1+\eps)} =C\left(2^{\frac 1M}\lambda^{1+\eps}\right)^n,\end{align*}
so taking appropriate limits, and noting that $\#\P(q), \#\mathcal{S}(q)<\infty$, we obtain $h_\infty=\log \lambda$. 
\end{proof}

\subsection{(\hyperref[UCS]{UCS}) is a weak condition}
\label{ssec:UCSweak}

Here we will use a simple set of examples to compare (\hyperref[UCS]{UCS}) with other conditions of this type.

 Set $a(n) = 1$ for all $n$, $\phi|_{[rv_1^{n, 1}]}= -n\log 2$ and $\phi=0$ otherwise.  For the first return map to $[r]$ the induced potential $\overline{\phi}:[r]\to \R$, takes the value $ -n\log 2$ on the vertex corresponding to the loop of length $n$.  Then
 $$P(\overline{\phi}) = \log\left(\sum_{n\ge 1} \frac1{2^n}\right)=0.$$ 
Since, moreover, $\sum_{n\ge 1} \frac n{2^n}<\infty$, $\phi$ is positive recurrent and has $P(\phi)=0$. 

The system $(\Sigma_V, \sigma, \phi)$ clearly satisfies (\hyperref[UCS]{UCS}) since for the periodic point $x_n$ of prime period $n$, $\frac1nS_n\phi(x_n) = -\log 2$.  On the other hand, the hyperbolicity condition as in \cite{InoRiv12, LiRiv14a} fails since for any $n$, there is a point $y_n\in [v_1^{n, 1}v_2^{n, 1}\cdots v_{n-1}^{n, 1}]$ such that $S_n\phi(y_n)=0$.  Moreover, (\hyperref[A]{A}), (\hyperref[C]{C}) and (\hyperref[D]{D}) also fail for the same reasons, as well as a condition like (\hyperref[CI]{CI}) where orbits are not assumed to start in a compact part.  Finally, regarding the conditions of \cite{LivSauVai98}, this would require $\sum_{C\in \mathcal{C}_1}\sup_{x\in C} e^{\phi(x)}<\infty$ as well as a condition like hyperbolicity to hold, both of which fail here.

 We can modify the example to $\phi|_{[v_{n-1}^{n, 1}r]}= -n\log 2$ and 0 otherwise, to obtain the same induced system as above, but here we see that we can fail  (\hyperref[B]{B}), as well as  condition like (\hyperref[CI]{CI}) where orbits are not assumed to end in a compact part, whilst satisfying (\hyperref[UCS]{UCS}).  If we wished to fail all of these conditions apart from (\hyperref[UCS]{UCS}), for a loop of length $n$ we could put the weight halfway along the loop.

\begin{remark}
If we wanted $\phi$ to be uniformly bounded, then the above examples can be smoothed out, eg putting weight $-2\log 2$ on $n/2$ of the vertices in the loop of length $n$ (suitably adjusting for when $n$ is odd).
\label{rmk:spread}
\end{remark}

\subsection{Example the showing the sharpness of \Cref{thm:CI-SPR}} \label{ssec:eg_not_spr}
Here we give a class of examples where (\hyperref[UCS]{UCS}) holds, but $\delta_{\phi, \infty} + h_\infty=0$ and (\hyperref[def:SPR]{SPR}) fails, so that the condition  $\delta_{\phi, \infty} + h_\infty<0$ in  \Cref{thm:CI-SPR} is necessary.

Let $a(n)= 2^n$ and $C, \beta>0$ to be chosen later.  Now define  $\phi|_{[rr]}=\log{C}$, $\phi|_{[rv_1^{n, i}]} = \log{C}-n\log 2- \beta \log n$ and $\phi=0$ otherwise (as in \Cref{rmk:spread} we could also spread this potential out if desired).  

First observe that $Z_n(\phi, [r])\ge C2^n2^{-n}n^{-\beta}$, so $P(\phi)\ge 0$.

Taking the first return map to $[r]$ the induced potential $\overline{\phi}$ corresponding to loops of length $n$ takes the value $\log C-n\log 2-\beta\log n$.  Then 
$$P(\overline{\phi})=\log\left(C\sum_n a(n)e^{-n\log 2-\beta\log n}\right) 
=\log\left(C\zeta(\beta)\right)$$ where $\zeta$ denote the Riemann zeta function.  We use the ideas of Hofbauer and Keller presented in \cite[Section 4.1]{IomTod13}, generalised to this setting (see also the ideas of \cite[Table 1]{Rue03}). 
\begin{itemize}
\item[(a)]  If $\beta>1$ and we choose $C = 1/\zeta(\beta)$ then the pressure of the induced system is zero, $\phi$ is recurrent and  $P(\phi)=0$. 
\item[(b)] If $\beta>1$ and $C>  1/\zeta(\beta)$, or $\beta\in (0, 1]$, then the pressure of the induced system is positive and this is not interesting for our purposes (note this would imply $P(\phi)>0$).  
\item[(c)] If $\beta>1$ and  $C< 1/\zeta(\beta)$ then $\phi$ is transient and $P(\phi)=0$.
\end{itemize}
We will now assume that we are in case (a).

Since 
$$C\sum_n na(n)e^{-n\log 2-\beta\log n} = C\sum_n n^{1-\beta},$$
the system is positive recurrent, and we have an equilibrium state $\mu_\phi$ here, if $\beta>2$ (if $\beta\in (1, 2]$ then $\phi$ is null recurrent); moreover there is a conformal measure $m_\phi$.  Since $h_{top}(\sigma)$ must solve 
$$1=\sum_n a(n)e^{-nh_{top}(\sigma)} = \sum_n 2^ne^{-nh_{top}(\sigma)},$$ we see that $h_{top}(\sigma) = \log 4$.

The fact that $h_\infty=\log 2$ follows from \Cref{lem:hinfa(n)}.    We next show that  $\delta_{\phi, \infty} = -\log 2$.  That $z_{n, \phi}(M, q)\ge -\log 2+\frac1n(\log C+\beta\log n)$ for $n+1>M$ and $n>n_q$ is immediate from the definition, so $z_{n, \phi}(M, q)\ge -\log 2$.  For the upper bound, the proof is similar to, though simpler than, that of \Cref{lem:hinfa(n)}: if we consider $v\in \mathcal{G}(q)$ as defined there, then for $x\in [v]$, $\frac1{|v|}S_{|v|}\phi(x)\le  -\log 2$   and since the finite behaviour contributed by any prefixes and suffixes disappears in the limit,  $\delta_{\phi, \infty} = -\log 2$ and so $\delta_{\phi, \infty} + h_\infty= 0$.

We see here that $Z_n^*(\phi, r)= C/n^\beta$ so (\hyperref[def:SPR]{SPR}) fails.
Hence \Cref{thm:CI-SPR} is sharp in the sense that we can satisfy (\hyperref[UCS]{UCS}), but if  $\delta_{\phi, \infty} + h_\infty< 0$ does not hold then  we can fail (\hyperref[def:SPR]{SPR}).  Note also that if $\phi$ was null recurrent or, as in case (c) above, transient, we would also fail these conditions in a more dramatic way.

\subsection{Infinite entropy case}

If $h_{top}(\sigma)=\infty$ then $h_\infty= \infty$, so $h_\infty+\delta_{\phi, \infty}<0$ doesn't make sense, and anything can happen.  For example, take $a(n)=2^{2^n}$ and set $\phi|_{[rr]}=\log C$, $\phi|_{[rv_1^{n, i}]} = \log C-2^n\log 2- \psi(n)$ and $\phi=0$ otherwise, where $\limsup_n\frac1n\log|\psi(n)|<\infty$ and $C$ is chosen so that $P(\overline{\phi})=0$.  Then we can easily ensure (\hyperref[UCS]{UCS}) by making $\psi$ not large for small $n$, but we can also pick $\psi(n)$ so that the system satisfies SPR, or choose it so that it does not.  

\subsection{Relation of bouquets to inducing schemes and general shifts}
\label{ssec:bouqind}

At the beginning of this section we described interval maps $(I, f)$ with an inducing scheme $(Y=\cup_iY_i, F=f^\tau)$.  
If we have $F(Y_j)=Y$ for all $i$, which is the case for the examples mentioned above, then we identify $Y$ with $r$, let $a(n)$ be the number of $j$ with $\tau_j=n$, and associate a loop $r\mapsto v_1^{n, i_j} \mapsto  v_2^{n, i_j}\mapsto \cdots\mapsto v_{n-1}^{n, i_j}\mapsto r$ with each such $j$.

We can project a sequence $(x_0, x_1, \ldots)\in \Sigma$ to $x\in I$ by a projection $\pi$ as follows.  Suppose that $x\in Y$ has $F^\ell(x)\in [rv_1^{n_\ell, i_\ell}]$ for all $\ell\ge 0$ some $n_\ell, i_\ell$.  Then there will be a corresponding sequence $(x_0, x_1, \ldots)\in \Sigma$ given by $(r, v_1^{n_0, i_0}, v_2^{n_0, i_0}, \ldots, v_{n_0-1}^{n_0, i_0}, r, v_1^{n_1, i_1}, \ldots)$.  So let $\pi(x_0, x_1, \ldots) = x$ here.  If $x_0= v_k^{n, i_j}$ for $k>1$ then consider $y\in Y$ the projection of the sequence $(r,v_1^{n, i_j}, \ldots, v_{k-1}^{n, i_j}, x_0, x_1, \ldots)$ and let $\pi(x_0, x_1, \ldots) = f^k(y)$.

If $\phi:I\to [-\infty, \infty]$ is a potential, then this lifts to a potential on the bouquet shift $ \phi\circ\pi$.  The regularity of the lifted potential depends on the regularity of the original one and the choice of inducing scheme.  
For some specific cases of multimodal maps where $\phi=-\log|Df|$ and there is an inducing scheme so that $\phi$  lifts to a potential of summable variation, see for example \cite[Proposition 4.1]{BruLuzStr03} which considers multimodal maps with different rates of growth of derivative along critical orbits.  In this case Collet-Eckmann maps yield symbolic models satisfying (\hyperref[UCS]{UCS}) along with our other equivalent properties, while non-Collet-Eckmann maps fail all of these.

We can extend a version of the coding used above for any topologically transitive CMS $(\Sigma, \sigma)$:  we can pick a 1-cylinder and take first returns to it and then use the induced system to recode the system via a bouquet with the root being the 1-cylinder selected.  Hence the bouquet setup captures the behaviour of \emph{any} topologically transitive CMS. 

\subsection{Cases where the measure of maximal entropy satisfies SPR}

We close by noting that Theorem~\ref{thm:CI-SPR} implies that if it is known that $h_\infty<h_{top}$, which means that the measure of maximal entropy is SPR (see \cite[Proposition 6.1]{Buz10}, \cite[Proposition 2.20]{IomTodVel22}), then also the equilibrium state for a potential $\phi$ with $\sup\phi-\inf\phi< h_{top}-h_\infty$ must also be SPR since this automatically implies $\delta_{\phi, \infty} +h_\infty<P(\phi)$. 
This is shown to be the case for interval maps in \cite{BruTod08}. 
There are various cases of systems which have a coding by a countable Markov shift and where it may, in the future, be proved that the measure of maximal entropy is SPR the above idea would then apply.  For example we might expect the surface diffeomorphisms considered in \cite{BuzCroSar22} to satisfy these conditions.

 \end{document}